\documentclass{amsart}
\usepackage{amsmath,amssymb,tikz}
\usetikzlibrary{arrows,shapes,positioning,decorations.markings}
\usepackage[margin=1.5cm]{geometry}
\newtheorem{theorem}{Theorem}[section]
\newtheorem{lemma}[theorem]{Lemma}

\newtheorem{conjecture}[theorem]{Conjecture}
\newtheorem{proposition}[theorem]{Proposition}

\newtheorem{definition}[theorem]{Definition}

\newcommand{\Sym}{\mathop{\mathrm{Sym}}}
\newcommand{\Cay}{\mathop{\Gamma}}
\newcommand{\Aut}{\mathop{\mathrm{Aut}}}

\numberwithin{equation}{section}
\def\Z#1{{\bf Z}(#1)}
\def\cent#1#2{{\bf C}_{#1}(#2)}
\def\norm#1#2{{\bf N}_{#1}(#2)}
\def\Dic{{\rm Dic}}
\makeatletter
\newcommand{\myitem}[1]{%
\item[#1]\protected@edef\@currentlabel{#1}%
}
\makeatother

\begin{document}

\title[Babai-Godsil vs Xu]{On the equivalence between a conjecture of Babai-Godsil and  a conjecture of Xu concerning the  enumeration of Cayley graphs}

\author[P. Spiga]{Pablo Spiga}
\address{Pablo Spiga,
Dipartimento di Matematica e Applicazioni, University of Milano-Bicocca,\newline
Via Cozzi 55, 20125 Milano, Italy}\email{pablo.spiga@unimib.it}

\thanks{Address correspondence to P. Spiga, E-mail: pablo.spiga@unimib.it.}

\begin{abstract}
In this paper we show that two distinct conjectures, the first proposed by Babai and Godsil in $1982$ and the second proposed by Xu in $1998$, concerning the asymptotic enumeration of Cayley graphs are in fact equivalent. This result follows from a more general theorem concerning the asymptotic enumeration of a certain family of Cayley graphs.
\end{abstract}

\keywords{regular representation, Cayley graph, automorphism group, asymptotic enumeration, graphical regular representation, GRR, normal Cayley graph, Babai-Godsil conjecture, Xu conjecture}
\subjclass[2010]{05C25, 05C30, 20B25, 20B15}
\maketitle

\section{Introduction}\label{sec:introduction}
All digraphs and groups considered in this paper are finite. A \textbf{\textit{digraph}} $\Gamma$ is an ordered pair $(V, A)$ where the \textbf{\textit{vertex-set}} $V$ is a finite non-empty set and the \textbf{\textit{arc-set}} $A \subseteq V \times V$ is a binary relation
on $V$. The elements of $V$ and $A$ are called \textbf{\textit{vertices}} and \textbf{\textit{arcs}} of $\Gamma$, respectively. An \textbf{\textit{automorphism}} of $\Gamma$ is a permutation $\sigma$
of $V$ with $A^\sigma=A$, that is, $(x^\sigma , y^\sigma ) \in A$ for every $(x, y) \in A$.
Let $R$ be a group and let $S$ be a subset of $R$. The \textbf{\textit{Cayley digraph}} on $R$ with connection set $S$ (which we   denote by $\Cay(R, S)$) is the
digraph with vertex-set $R$ and with $(g, h)$ being an arc if and only if $hg^{-1} \in S$. The group $R$ acts regularly as a group of
automorphisms of $\Cay(R, S)$ by right multiplication and hence $R \le \Aut(\Cay(R, S))$.
When  $R=\Aut(\Cay(R, S)$, the digraph $\Gamma$ is called a \textbf{\textit{DRR}} (for digraphical regular representation).
Babai and Godsil made the following conjecture.

\begin{conjecture}[\cite{Go2}, Conjecture 3.13; \cite{BaGo}]\label{digraphmainconjecturee}
Let $R$ be a group of order $r$. The proportion of subsets $S$ of $R$ such that $\Cay(R,S)$ is a $\mathrm{DRR}$ goes to $1$ as $r\to\infty$.
\end{conjecture}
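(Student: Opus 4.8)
The plan is to recast the DRR property group-theoretically and then estimate the number of ``bad'' connection sets. Writing $A=\Aut(\Cay(R,S))$, the digraph $\Cay(R,S)$ is a DRR precisely when the stabiliser $A_1$ of the identity vertex $1\in R$ is trivial: since $R$ acts regularly we have $|A|=r\cdot|A_1|$, so $A=R$ if and only if $A_1=1$. Moreover, if $\sigma\in A_1$ then $\sigma$ maps the out-neighbourhood of $1$, which is exactly $S$, to itself, so $S^\sigma=S$. Thus a subset $S$ yields a non-DRR exactly when some nontrivial permutation of $R$ fixing $1$ lies in $\Aut(\Cay(R,S))$, and such a permutation satisfies $S^\sigma=S$. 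My aim is to show that the number of such $S$ is $o(2^r)$.

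The naive union bound is hopeless: a single permutation fixing $1$ preserves at most $2^{r-1}$ subsets setwise, but there are $(r-1)!$ permutations of $R$ fixing $1$, which dwarfs $2^r$. So I would instead split the count according to whether $R$ is normal in $A$. Call $\Cay(R,S)$ \emph{normal} if $R\trianglelefteq A$. The easier regime is the normal non-DRR case: here $A_1$ normalises $R$ and fixes $1$, hence embeds into $\Aut(R)$, so the offending permutation may be taken to be a nontrivial group automorphism $\alpha$ of $R$ with $S^\alpha=S$. Such an $S$ must be a union of $\langle\alpha\rangle$-orbits on $R$, so the number of them is $2^{c(\alpha)}$, where $c(\alpha)$ is the number of orbits of $\alpha$ on $R$.

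The point of the normal case is that $c(\alpha)$ stays bounded away from $r$. The fixed-point set of a nontrivial $\alpha$ is a proper subgroup of $R$, hence has size at most $r/2$; the remaining points lie in orbits of length at least $2$, so $c(\alpha)\le\tfrac12\bigl(r+|\mathrm{Fix}(\alpha)|\bigr)\le\tfrac34 r$. Combining this with the crude estimate $|\Aut(R)|\le r^{\log_2 r}=2^{(\log_2 r)^2}$ (an automorphism is determined by the images of at most $\log_2 r$ generators), the number of normal non-DRRs is at most
\[
\sum_{1\ne\alpha\in\Aut(R)} 2^{c(\alpha)}\;\le\;2^{(\log_2 r)^2}\cdot 2^{3r/4},
\]
which is $o(2^r)$ as $r\to\infty$. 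Hence almost all \emph{normal} Cayley digraphs on $R$ are DRRs.

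This reduces Conjecture~\ref{digraphmainconjecturee} to controlling the non-normal regime, and that is where I expect the real difficulty to lie. One would have to show that the proportion of $S$ with $R\not\trianglelefteq\Aut(\Cay(R,S))$ tends to $0$. When the offending automorphism does not normalise $R$, it gives no direct grip on $S$ through a group automorphism, so the orbit-counting shortcut collapses and one is forced into the fine structure of transitive permutation groups containing a regular subgroup: the analysis of block systems, socle types, and primitive components, ultimately resting on the classification of finite simple groups. Since this non-normal case is itself a deep open problem, rather than attack it head-on I would aim to prove the equivalence of the two rarity statements, thereby reducing Babai--Godsil to the assertion that almost all Cayley digraphs are normal.
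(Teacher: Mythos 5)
There is a genuine gap, and you have in fact named it yourself: your argument covers only the normal regime ($R\unlhd\Aut(\Cay(R,S))$) and then defers the non-normal regime to a ``reduction'' that does not close the circle. First, a point of orientation: this paper does not prove Conjecture~\ref{digraphmainconjecturee} at all; it is stated as a known theorem, proved in~\cite{MSMS}, and the present paper only carries out the \emph{undirected} analogue of your normal-case computation (Lemma~\ref{l:aut} and Proposition~\ref{propo:aut} are exactly the inverse-closed version of your bound $\sum_{\alpha\ne 1}2^{c(\alpha)}\le 2^{(\log_2 r)^2+3r/4}$, complicated by the extra fixed sets of $\iota$ and $\iota\varphi$ and by the dicyclic/abelian exceptions). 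Your normal-case estimate is correct and is the standard, easy half of the problem.

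The gap is that your proposed endgame is circular. You suggest reducing Babai--Godsil to the statement that almost all Cayley digraphs on $R$ are normal --- but that statement is precisely Xu's conjecture for digraphs, and (as the introduction of this paper records) it was only established \emph{as a consequence of} the proof of Conjecture~\ref{digraphmainconjecturee} in~\cite{MSMS}, not the other way around. An equivalence of the two statements (which is what Theorem~\ref{equivalence} proves in the undirected setting) tells you that proving either one suffices, but it does not prove either one: the non-normal case, where $\Aut(\Cay(R,S))$ contains $R$ as a non-normal regular subgroup, is the entire difficulty, and handling it requires the structural analysis of transitive groups containing a regular subgroup carried out in~\cite{MSMS}. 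As written, your proposal establishes only that the proportion of $S$ with $R<\norm{\Aut(\Cay(R,S))}{R}$ tends to $0$, which is the digraph analogue of Theorem~\ref{thrm:1}, not of Conjecture~\ref{digraphmainconjecturee}.
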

This conjecture has been recently proved in~\cite{MSMS}. This paper is the first step for proving yet another conjecture of Babai and Godsil concerning the enumeration of \textbf{\textit{Cayley graphs}}.  Recall that $\Cay(R,S)$ is undirected if and only if $S$ is \textbf{\textit{inverse-closed}}, that is, $S^{-1}:=\{s^{-1}\mid s\in S\}=S$. While the number of Cayley digraphs on $R$ is $2^{|R|}$, which is a number that depends on the cardinality of $R$ only, the number of undirected Cayley graphs on $R$ is $2^{\frac{|R|+|{\bf I}(R)|}{2}}$ (see Lemma~\ref{lemma111}), where ${\bf I}(R):=\{\iota\in R\mid \iota^2=1\}$, and hence depends on the algebraic structure of $R$.
 
Although the difference between Cayley digraphs and Cayley graphs seems only minor and to some extent only aesthetic, the behaviour between these two classes of combinatorial objects with respect to their automorphisms can be dramatically different. For instance, it was proved by Babai~\cite[Theorem~$2.1$]{babai11} that, except for $$Q_8,\,C_2\times C_2,\,C_2\times C_2\times C_2,\, C_2\times C_2\times C_2\times C_2\, \textrm{ and }C_3\times C_3,$$   every finite group $R$ admits a DRR. Borrowing a phrase which I once heard from Tom Tucker: ``besides some low level noise, every finite group admits a DRR''. The analogue for GRRs is not the same. Indeed, it turns out that there are two (and only two) infinite families of groups that do no admit GRRs. The first family consists of abelian groups of exponent greater than two. If $R$ is such a group and $\iota$ is the automorphism of $R$ mapping every element to its inverse, then every Cayley graph on $R$ admits $R\rtimes\langle \iota\rangle$ as a group of automorphisms. Since $R$ has exponent greater than $2$, $\iota\ne 1$ and hence no Cayley graph on $R$ is a GRR. The other family of groups that do not admit GRRs are the generalised dicyclic groups, see \cite[Definition $1.1$]{MSV} for a definition and also Definition~\ref{defeq:2} below. These two families were discovered by Mark Watkins~\cite{Watkins22}.
 
 It was proved by Godsil~\cite{Godsilnon} that abelian groups of exponent greater than $2$ and generalised dicyclic groups are the only two infinite families of groups that do not admit GRRs. (A lot of papers have been published for determining those groups admitting a GRR, and some
of the most influential works along the way appeared in~\cite{1010,1313, 1414, 1717, 1818,1919}.) The stronger Conjecture~\ref{conjecture....} was made (at various times) by Babai, Godsil, Imrich and Lov\'{a}sz.
 
 \begin{conjecture}[see \cite{BaGo}, Conjecture $2.1$ and \cite{Go2}, Conjecture $3.13$]\label{conjecture....}Let $R$ be a group of order $r$ which is neither generalized dicyclic nor abelian of exponent greater than $2$. The proportion of inverse-closed subsets $S$ of $R$ such that $\Cay(R,S)$ is a $\mathrm{GRR}$ goes to $1$ as $r\to \infty$.
 \end{conjecture}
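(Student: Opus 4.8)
The plan is to fix $G := \Aut(\Cay(R,S))$, to recall that $R$ acts regularly on itself so that $\Cay(R,S)$ is a GRR precisely when the vertex-stabiliser $G_1$ is trivial, and to split the analysis according to whether or not $R$ is normal in $G$. The two excluded families are exactly the obstruction to $G_1=1$: if $R$ is abelian of exponent $>2$ then the inversion map $\iota$ is a nontrivial automorphism fixing every inverse-closed $S$, and if $R$ is generalised dicyclic there is an analogous forced automorphism (Definition~\ref{defeq:2}); removing these families is what makes $G_1=1$ attainable.

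\textbf{The normal case.} Suppose $R\trianglelefteq G$. Since $G_1$ fixes the identity vertex and normalises $R$, each of its elements acts on $R$ as a group automorphism, so $G_1\le\Aut(R)$ and in fact $G_1=\{\sigma\in\Aut(R):S^\sigma=S\}$. Thus in this case $\Cay(R,S)$ is a GRR if and only if $S$ is fixed by no nontrivial automorphism of $R$. I would bound the number of bad $S$ by a union bound over $1\ne\sigma\in\Aut(R)$: the inverse-closed subsets fixed by $\sigma$ are the unions of $\langle\sigma\rangle$-orbits on the inverse-pairs $\{x,x^{-1}\}$, so their number is $2^{\mathrm{orb}(\sigma)}$, where $\mathrm{orb}(\sigma)$ is smaller than the total exponent $(|R|+|{\bf I}(R)|)/2$ of Lemma~\ref{lemma111} by a constant fraction of the support of $\sigma$. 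Combining this with the crude bound $|\Aut(R)|\le |R|^{\log_2|R|}=2^{(\log_2|R|)^2}$ makes the count of bad $S$ negligible. The only delicate point is uniformly controlling automorphisms of small support (for instance transvections when $R$ is elementary abelian), which I would handle through the standard minimal-degree estimates for $\Aut(R)$ acting on $R$.

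\textbf{The non-normal case.} This is the heart of the matter: here $R\not\trianglelefteq G$, so $\langle R,g\rangle$ is a proper overgroup for some $g\in G_1$ not normalising $R$, and the task is to show that the inverse-closed $S$ producing such graphs form a vanishing proportion — equivalently, that almost all inverse-closed Cayley graphs on $R$ are normal. I would first dispose of the extreme regimes by a first-moment argument: a uniformly random inverse-closed $S$ has $|S|$ concentrated near $|R|/2$, so the subfamilies with $|S|$ atypically small or large contribute negligibly. For the bulk regime I would invoke the structure theory of transitive groups containing a regular subgroup: combining the O'Nan--Scott theorem with the classification of finite simple groups, the overgroups $G$ with $R$ regular and non-normal fall into a controlled list, and for each the number of $G$-invariant inverse-closed subsets, which is $2^{s(G)}$ for an integer $s(G)$ determined by the $G_1$-orbits on $R\setminus\{1\}$, is exponentially smaller than $2^{(|R|+|{\bf I}(R)|)/2}$; a union bound over the list then finishes the bulk.

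The main obstacle is precisely the subcase in which $G_1$ is nontrivial but very small — say of prime order — while $R$ is still not normal in $G$, since then $s(G)$ need not be small and, worse, the number of candidate overgroups $\langle R,g\rangle$ is far too large for a naive union bound over $g\in\Sym(R)$. To handle it I would not range over all such $g$, but rather exploit that $g$ is a graph automorphism: this forces $g$ to respect the block structure and the common-neighbour relations of $\Cay(R,S)$, pinning $g$ down in terms of the coset geometry of a characteristic subgroup of $R$ and drastically cutting the number of admissible $g$. I expect this local rigidity argument, transporting the already-established normality phenomenon for Cayley \emph{digraphs} (which underlies the proof of Conjecture~\ref{digraphmainconjecturee} in~\cite{MSMS}) across to the sparse inverse-closed setting, to be the decisive and most technical step; it is exactly the content that the equivalence theorem of this paper is designed to isolate.
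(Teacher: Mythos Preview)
The statement you are attempting to prove is Conjecture~\ref{conjecture....}, which the paper explicitly declares to be \emph{open}: ``This conjecture is open at the moment and some of the techniques developed in~\cite{MSMS} for dealing with digraphs are not suited for dealing with undirected graphs.'' There is therefore no proof in the paper to compare your proposal against. What the paper actually establishes is only the \emph{normal} piece of your dichotomy (Theorem~\ref{thrm:1}, via Lemmas~\ref{l:aut0}--\ref{l:aut} and Proposition~\ref{propo:aut}), together with the equivalence Theorem~\ref{equivalence}, which reduces the full conjecture to Xu's Conjecture~\ref{conjectureXu}. Your non-normal case remains, in the paper's own language, ``mysterious and much harder to analyze,'' and your proposal for it is an outline of hopes (O'Nan--Scott, CFSG, a transported rigidity argument from~\cite{MSMS}) rather than a proof; you yourself flag the small-$G_1$, non-normal subcase as an unresolved obstacle.

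Even restricting attention to the normal case, your sketch understates the difficulty and misidentifies where it lies. The problematic automorphisms are not those of ``small support'' to be dispatched by minimal-degree estimates; they are those $\varphi$ with $|R:\cent R\varphi|\in\{2,3,4\}$, for which the orbit count on inverse-pairs can come within $O(1)\cdot|R|$ of $\mathbf{c}(R)$, and one must also track $|\cent R\varphi^{-1}|$ because $\iota$ and $\varphi$ together govern the orbit structure. The paper devotes Lemmas~\ref{l:aut0} and~\ref{l:aut-1} to a detailed case analysis (invoking Miller's $3/4$ theorem and the structure theorem of~\cite{LMac}) precisely to show that either the orbit count drops by at least $|R|/96$ or $\varphi$ is one of the forced automorphisms $\iota$, $\bar\iota_A$ of the two excluded families. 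A generic union bound with $|\Aut(R)|\le 2^{(\log_2|R|)^2}$ only succeeds once this uniform gap is in hand; ``standard minimal-degree estimates'' do not supply it.
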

This conjecture is open at the moment and some of the techniques developed in~\cite{MSMS} for dealing with digraphs are not suited for dealing with undirected graphs.  

The scope of this paper is twofold. Broadly speaking, we aim to start a long process where we try to generalize and adapt the results obtained in~\cite{MSMS} for eventually dealing with undirected graphs and proving Conjecture~\ref{conjecture....}. We start this process by dealing with the \textit{\textbf{first natural obstruction}} for the existence of GRRs. 
Given an inverse-closed subset $S$ of $R$, the set $S$ fails to give rise to a GRR
essentially for two different reasons. (Let $A := \Aut(\Cay(R, S))$.)
\begin{enumerate}
\item There are non-identity group automorphisms of $R$ leaving the set $S$ invariant. This case arises when $\norm  A R>$ (this is the typical obstruction and we have encountered this obstruction already when we briefly discussed abelian groups of exponent greater than $2$).
\item The only group automorphism of $R$ leaving the set $S$ invariant is the identity  and there are some automorphisms of $\Cay(R, S)$ not lying in $R$. This case arises when $\norm  A R = R$ and $A > R$ : this obstruction is somehow
mysterious and much harder to analyze.
\end{enumerate}
These two obstructions are clear (if not obvious) to readers familiar with the enumeration problem of Cayley graphs~\cite{MSMS} and in
particular to readers familiar with~\cite{BaGo}. Actually the same obstructions arise in the enumeration problem of other types of Cayley graphs, for instance in the asymptotic enumeration of DFRs~\cite{DFR} and GFRs~\cite{DTW,GFR} and in the recent solution of the GFR conjecture~\cite{GFR}. In this paper we deal with the first obstruction. 
\begin{theorem}\label{thrm:1}Let $R$ be a group of order $r$ which is neither generalized dicyclic nor abelian of exponent greater than $2$. The proportion of inverse-closed subsets $S$ of $R$ such that $\norm{\Aut(\Cay(R,S))}R>R$ goes to $0$ as $r\to \infty$.
\end{theorem}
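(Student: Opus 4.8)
The plan is to first translate the group-theoretic condition $\norm{\Aut(\Cay(R,S))}{R}>R$ into a statement about group automorphisms fixing $S$, and then to estimate, by a union bound, the proportion of inverse-closed sets admitting such a symmetry. Write $A:=\Aut(\Cay(R,S))$ and recall that the normalizer of the right-regular copy of $R$ inside $\Sym(R)$ is the holomorph $R\rtimes\Aut(R)$, with $\Aut(R)$ realised as the stabilizer of the identity vertex. An element $\alpha\in\Aut(R)$ fixes the vertex $1$ and sends the arc $(1,s)$ to $(1,s^\alpha)$, so $\alpha$ lies in $A$ if and only if $S^\alpha=S$. Intersecting $A$ with the holomorph therefore gives $\norm{A}{R}=R\rtimes\{\alpha\in\Aut(R)\mid S^\alpha=S\}$, whence $\norm{A}{R}>R$ if and only if some non-identity $\alpha\in\Aut(R)$ satisfies $S^\alpha=S$. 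Since every non-trivial finite group contains an element of prime order, it suffices to bound the proportion of inverse-closed $S$ fixed by some automorphism $\alpha$ of prime order $p$.

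Next I would set up the enumeration. Let $\iota$ denote the inversion permutation $g\mapsto g^{-1}$ of $R$; it commutes with every group automorphism, and an inverse-closed set fixed by $\alpha$ is exactly a union of orbits of $\langle\alpha,\iota\rangle$. Hence the number of such sets is $2^{k(\alpha)}$, where $k(\alpha)$ is the number of $\langle\alpha,\iota\rangle$-orbits on $R$, while by Lemma~\ref{lemma111} the total number of inverse-closed sets is $2^{K}$ with $K=(r+|{\bf I}(R)|)/2$ equal to the number of $\langle\iota\rangle$-orbits. Since $\alpha$ permutes the $K$ orbits of $\langle\iota\rangle$ and has prime order $p$, one gets $k(\alpha)=\phi+(K-\phi)/p$, where $\phi$ is the number of $\langle\iota\rangle$-orbits fixed by $\alpha$. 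A short computation identifies $\phi=(m+n)/2$, where $m=|\{g\mid g^\alpha=g\}|$ and $n=|\{g\mid g^\alpha=g^{-1}\}|$ count the elements fixed and inverted by $\alpha$. Consequently the sought proportion is at most
\begin{equation*}
\sum_{\alpha}2^{\,k(\alpha)-K}=\sum_{\alpha}2^{-\frac{p-1}{p}\cdot\frac{r+|{\bf I}(R)|-m-n}{2}},
\end{equation*}
the sum ranging over non-identity automorphisms of prime order. Here $m+n\le r+|{\bf I}(R)|$, with equality precisely when $\alpha$ fixes or inverts every element of $R$ and fixes every involution.

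The argument then splits according to the size of the \emph{savings exponent} $e(\alpha):=\frac{p-1}{p}\cdot\frac{r+|{\bf I}(R)|-m-n}{2}$. Using the standard bound $|\Aut(R)|\le r^{\log_2 r}=2^{(\log_2 r)^2}$, every automorphism with $e(\alpha)\ge 2(\log_2 r)^2$ contributes negligibly: the total contribution of these \emph{generic} automorphisms is at most $|\Aut(R)|\cdot 2^{-2(\log_2 r)^2}\to 0$. Thus the whole difficulty is concentrated in the exceptional automorphisms, those fixing or inverting all but $O((\log r)^2)$ elements of $R$, for which $m+n$ is within $O((\log r)^2)$ of its maximum $r+|{\bf I}(R)|$.

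The main obstacle is precisely the analysis of these \emph{near-inverting} automorphisms, and this is where the two excluded families must enter. The guiding structural principle is the classical phenomenon that an automorphism inverting a large proportion of the elements of a group forces strong commutativity: if $\alpha$ inverts more than three quarters of $R$ then $R$ is abelian and $\alpha$ coincides with $\iota$, while the mixed situation in which $\alpha$ fixes one half of $R$ and inverts the complementary coset is exactly the defining feature of a generalized dicyclic group (cf.\ Definition~\ref{defeq:2}). I would therefore prove a quantitative dichotomy: if $m+n$ is close enough to $r+|{\bf I}(R)|$, then $R$ is abelian of exponent greater than $2$ or generalized dicyclic. Since both families are excluded by hypothesis, no such exceptional automorphism exists once $r$ is large, leaving only the generic automorphisms already handled by the union bound and completing the proof. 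Establishing this dichotomy with a threshold weak enough to mesh with the $2^{(\log_2 r)^2}$ automorphism count, rather than merely the $3/4$-type bounds available off the shelf, is the hard part of the argument.
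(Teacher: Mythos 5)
Your reduction and counting framework are correct and coincide with the paper's own strategy: translate $\norm{\Aut(\Cay(R,S))}{R}>R$ into the existence of a non-identity $\alpha\in\Aut(R)$ with $S^\alpha=S$, pass to an automorphism of prime order, count the $\alpha$-invariant inverse-closed sets as $2^{k(\alpha)}$ with $k(\alpha)$ the number of $\langle\alpha,\iota\rangle$-orbits, and apply a union bound against $|\Aut(R)|\le 2^{(\log_2 r)^2}$. Your formula $k(\alpha)=\phi+(K-\phi)/p$ with $\phi=(m+n)/2$ is a correct reformulation of the orbit-counting computation in Lemma~\ref{l:aut} (for $p=2$ it gives exactly the quantity $o=\frac14(|R|+|{\bf I}(R)|+|\cent R\varphi|+|\cent R\varphi^{-1}|)$ appearing there), and the split into generic and exceptional automorphisms is the same division the paper makes via Proposition~\ref{propo:aut}.

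The genuine gap is that the ``quantitative dichotomy'' you defer --- if $m+n$ is close to $r+|{\bf I}(R)|$ then $R$ is abelian of exponent greater than $2$ or generalized dicyclic --- is not a finishing touch but the entire mathematical content of the theorem; it is precisely Lemma~\ref{l:aut}, whose proof (together with Lemmas~\ref{l:aut0} and~\ref{l:aut-1}) occupies all of Section~2. You correctly observe that the off-the-shelf $3/4$-type bounds do not suffice, but the reason is worth making explicit: the dangerous regime is not ``$\alpha$ inverts almost everything'' (which Miller's theorem \cite{Miller} handles) but the mixed regime where neither $m$ nor $n$ alone exceeds $3|R|/4$ yet $m+n$ is still near its maximum --- e.g.\ $m=|R|/2$, $n=|R|/2+|{\bf I}(R)|$, or $m=|R|/4$, $n=3|R|/4$. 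The paper disposes of this by a case analysis on the index $|R:\cent R\varphi|$ (treating indices $2$, $3$, and $\ge 4$ separately), invoking Miller's theorem for the subgroup on which $\iota_x$ acts and, in the index-$\ge 4$ case with $|\cent R\varphi^{-1}|>2|R|/3$, the Liebeck--MacHale structure theorem to pin down an abelian subgroup $A$ of index $2$ and then to rule out every configuration except the two excluded families. None of this is present in your proposal, and it does not follow from the facts you cite; moreover the paper actually proves the dichotomy at the much stronger linear threshold $e(\alpha)\ge |R|/96$ (which is what makes Proposition~\ref{propo:aut} quantitative), not merely at the $O((\log r)^2)$ threshold you would need. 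As written, your argument establishes the theorem only modulo an unproved lemma that is the heart of the matter.
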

We observe that in Proposition~\ref{propo:aut} we have a quantified version of Theorem~\ref{thrm:1}. Moreover, in Lemma~\ref{l:aut} we have a more technical version of Theorem~\ref{thrm:1} which includes also generalized dicyclic groups and abelian groups of exponent greater than $2$. These two more techinical results are in our opinion needed to follow the footsteps of the argument in~\cite{MSMS} for the asymptotic enumeration of Cayley digraphs.

The second scope of this paper is to prove that a famous conjecture of Xu on the asymptotic enumeration of normal Cayley graphs is actually equivalent to Conjecture~\ref{conjecture....}. A Cayley (di)graph $\Gamma$ on $R$ is said to be a \textit{\textbf{normal Cayley (di)graph on}} $R$ if the regular
representation of $R$ is normal in $\Aut(\Gamma)$, that is, $R\unlhd \Aut(\Gamma)$. Clearly, every DRR and every GRR $\Gamma$ on $R$ is a normal Cayley (di)graph because $R=\Aut(\Gamma)$. Xu has conjectured that almost all Cayley (di)graphs on $R$ are normal Cayley
(di)graphs on $R$.
\begin{conjecture}[see Conjecture 1~\cite{Xu1998}]\label{conjectureXu} The minimum, over all groups $R$ of order $r$, of the proportion of inverse-closed subsets $S$ of $R$ such
that $\Cay(R, S)$ is a normal Cayley digraph tends to $1$ as $r \to \infty$.
\end{conjecture}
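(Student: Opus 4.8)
The plan is to reduce Conjecture~\ref{conjectureXu} to the control of the two obstructions isolated in the introduction, handling the first by Theorem~\ref{thrm:1} together with its quantified and technical refinements Proposition~\ref{propo:aut} and Lemma~\ref{l:aut}. Throughout, write $A=\Aut(\Cay(R,S))$, let $A_1$ be the stabiliser in $A$ of the vertex $1\in R$, and set $\Aut(R,S)=\{\alpha\in\Aut(R)\mid S^\alpha=S\}$. The starting point is the classical identification $\norm A R=R\rtimes\Aut(R,S)$: an element of $A$ normalising the regular subgroup $R$ and fixing the vertex $1$ acts on $R$ as a group automorphism, and it preserves arcs precisely when it fixes $S$ setwise. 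Hence $\Cay(R,S)$ is a normal Cayley digraph if and only if $A=\norm A R$, equivalently $A_1=\Aut(R,S)$, and the inverse-closed sets $S$ for which normality fails split into
\begin{enumerate}
\item those with $\norm A R>R$, i.e.\ $\Aut(R,S)\neq 1$ (the \emph{first obstruction}), and
\item those with $\norm A R=R$ and $A>R$, i.e.\ $\Aut(R,S)=1$ while $A_1\neq 1$ (the \emph{second obstruction}).
\end{enumerate}

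For groups $R$ that are neither generalised dicyclic nor abelian of exponent greater than $2$, the first family has vanishing proportion by Theorem~\ref{thrm:1}, so for such $R$ the whole difficulty is confined to the second obstruction. For the excluded groups one cannot expect $\norm A R=R$ generically: if $\iota$ denotes inversion then $\iota\in\Aut(R,S)$ for every inverse-closed $S$, so $\norm A R>R$ always, which is why Theorem~\ref{thrm:1} omits them. Here I would measure against the \emph{expected normaliser} $N_0:=R\rtimes\langle\iota\rangle$ (and the analogous overgroup in the generalised dicyclic case) and use the technical Lemma~\ref{l:aut} to show that the proportion of inverse-closed $S$ with $\norm A R>N_0$ tends to $0$. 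Since $R$ has index two in $N_0$, it is normal there, so the sets with $\norm A R=N_0=A$ already yield normal Cayley digraphs; thus, for the exceptional groups too, only the second obstruction survives. This is exactly the role of the extra generality recorded in Lemma~\ref{l:aut}.

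The core of the argument, and the step I expect to be the main obstacle, is the second obstruction: bounding the number of inverse-closed $S$ with $\Aut(R,S)=1$ but $A_1\neq 1$. Following the digraph treatment of~\cite{MSMS}, the strategy is a union bound over the transitive overgroups $A$ of $R$ in which $R$ is self-normalising. For a fixed such $A$, the inverse-closed sets $S$ with $A\le\Aut(\Cay(R,S))$ are exactly the inverse-closed unions of $A_1$-orbits on $R$; since $A_1\neq 1$ merges orbits, the number of admissible orbit-classes is strictly below $(r+|{\bf I}(R)|)/2$, so there are at most $2^{c}$ such $S$ with $c<(r+|{\bf I}(R)|)/2$. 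Dividing by the total count $2^{(r+|{\bf I}(R)|)/2}$ from Lemma~\ref{lemma111} gives an exponentially small contribution per overgroup. The genuinely hard point, which goes beyond what the present paper establishes, is to sum these contributions and to do so \emph{uniformly over all groups $R$ of order $r$}, as required by the minimum in Conjecture~\ref{conjectureXu}: this demands classifying, or at least bounding the number and the orbit structure of, the transitive overgroups of a self-normalising regular subgroup, which is precisely the mysterious phenomenon flagged in obstruction~(2).
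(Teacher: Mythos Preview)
This statement is Conjecture~\ref{conjectureXu}, and the paper does \emph{not} prove it: it is presented as an open conjecture. The paper's contribution concerning it is Theorem~\ref{equivalence}, which shows that Conjecture~\ref{conjectureXu} is equivalent to the (also open) Babai--Godsil Conjecture~\ref{conjecture....}. There is therefore no proof in the paper to compare your proposal against.

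Your write-up is not a proof but a strategy sketch, and you are candid about this: you correctly isolate the second obstruction (transitive overgroups $A>R$ with $\norm A R=R$) as the unresolved core and say explicitly that handling it ``goes beyond what the present paper establishes.'' That assessment is accurate and matches the paper's own framing in the introduction. Your treatment of the first obstruction via Theorem~\ref{thrm:1} is correct; for the exceptional families (abelian of exponent greater than $2$, generalised dicyclic) the paper does not argue via Lemma~\ref{l:aut} as you suggest, but simply cites~\cite{DSV,MSV}, where Conjecture~\ref{conjectureXu} is already established for those groups. Either way, since the second obstruction remains genuinely open, your proposal---like the paper itself---leaves Conjecture~\ref{conjectureXu} unproved. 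If your intent was instead to prove the equivalence Theorem~\ref{equivalence}, note that this requires much less: one only needs Theorem~\ref{thrm:1} to pass from $\mathcal{N}(R)$ to $\mathcal{C}(R)$, with no control of the second obstruction at all.
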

Xu has formulated an analogous conjecture for Cayley digraphs and this
version was shown to be true in~\cite{MSMS} by proving the stronger Conjecture~\ref{digraphmainconjecturee}. The veracity of Conjecture~\ref{conjectureXu} when $R$ is an abelian group
and when $R$ is a dicyclic group was proved in~\cite{DSV, MSV}. In this paper we show that Conjecture~\ref{conjecture....} and Conjecture~\ref{conjectureXu} are actually equivalent.
\begin{theorem}\label{equivalence}Conjecture~$\ref{conjecture....}$ holds true if and only if Conjecture~$\ref{conjectureXu}$ holds true.
\end{theorem}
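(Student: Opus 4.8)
The plan is to exploit the elementary dictionary between the automorphism group of a Cayley graph and the stabiliser of its connection set, with Theorem~\ref{thrm:1} as the sole quantitative input. For an inverse-closed $S\subseteq R$ put $A=\Aut(\Cay(R,S))$. As $R$ acts regularly, the stabiliser in $\norm{A}{R}$ of the vertex~$1$ is exactly $\Aut(R,S):=\{\alpha\in\Aut(R)\mid S^\alpha=S\}$, so that $\norm{A}{R}=R\rtimes\Aut(R,S)$. Hence $\Cay(R,S)$ is a GRR if and only if $A=R$; it is a normal Cayley graph if and only if $A=\norm{A}{R}$; and it realises the first obstruction $\norm{A}{R}>R$ if and only if $\Aut(R,S)\ne 1$. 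The first step is to record two inclusions among inverse-closed connection sets: every GRR is a normal Cayley graph, and every normal Cayley graph that is \emph{not} a GRR satisfies $R<A=\norm{A}{R}$, whence $\norm{A}{R}>R$. Thus the GRRs are contained in the normal Cayley graphs, and their complement inside the normal ones is contained in the set enumerated by Theorem~\ref{thrm:1}.

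Writing $g(R)$, $\nu(R)$ and $\omega(R)$ for the proportions of inverse-closed $S$ for which $\Cay(R,S)$ is, respectively, a GRR, a normal Cayley graph, and a graph with $\norm{A}{R}>R$, these inclusions give at once
\[
0\;\le\;\nu(R)-g(R)\;\le\;\omega(R).
\]
When $R$ is neither generalised dicyclic nor abelian of exponent greater than~$2$, Theorem~\ref{thrm:1} yields $\omega(R)\to 0$ as $r\to\infty$; hence on this class of ``generic'' groups the quantities $g(R)$ and $\nu(R)$ share the same limiting behaviour, and one tends to $1$ precisely when the other does.

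The implication Conjecture~\ref{conjectureXu} $\Rightarrow$ Conjecture~\ref{conjecture....} is now immediate: Xu's conjecture forces $\nu(R)\to 1$ for every group, in particular for the generic ones, and the displayed inequality then forces $g(R)\to 1$ there, which is Conjecture~\ref{conjecture....}. For the converse I would split Xu's minimum according to the three types of group of a given order. On generic groups Conjecture~\ref{conjecture....} gives $g(R)\to 1$, and hence $\nu(R)\to 1$ by the inequality. On the two exceptional families $g(R)$ is identically~$0$ and the inequality is silent; but there Xu's conjecture is already known unconditionally, having been established for abelian groups in~\cite{DSV} and for the (generalised) dicyclic groups in~\cite{MSV}. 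Amalgamating the three classes yields $\nu(R)\to 1$ uniformly over all groups of order $r$, which is exactly Conjecture~\ref{conjectureXu}.

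The delicate point, and the reason Theorem~\ref{thrm:1} is indispensable, is the treatment of the two excluded families in the direction Conjecture~\ref{conjecture....} $\Rightarrow$ Conjecture~\ref{conjectureXu}. Since abelian groups of exponent greater than~$2$ and generalised dicyclic groups admit no GRR at all, Conjecture~\ref{conjecture....} is vacuous for them, yet they still contribute to Xu's minimum; it is precisely here that the unconditional results of~\cite{DSV,MSV} are needed. A secondary technical matter is that both conjectures concern the \emph{minimum} of a proportion over all groups of order $r$, so the three limits must be taken uniformly and their thresholds combined; I would therefore phrase the whole argument in the ``for every $\varepsilon>0$ there is an $r_0$'' style to keep this uniformity explicit.
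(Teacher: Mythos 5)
Your proposal is correct and follows essentially the same route as the paper: both directions rest on the two inclusions $\mathcal{C}(R)\subseteq\mathcal{N}(R)$ and $\mathcal{N}(R)\setminus\mathcal{C}(R)\subseteq\{S\mid S=S^{-1},\ R<\norm{\Aut(\Cay(R,S))}{R}\}$, with Theorem~\ref{thrm:1} killing the error term and the results of~\cite{DSV,MSV} disposing of the abelian and generalised dicyclic families. The only cosmetic difference is that the paper discharges the exceptional families once at the outset rather than inside the Babai--Godsil~$\Rightarrow$~Xu direction, and your explicit remark about uniformity of the limits is a sound (if unstated in the paper) precaution.
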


\section{Group automorphisms}\label{automorphisms}
\begin{definition}\label{defeq:1}
{\rm 
Given a finite group $R$ and $x\in R$, we let $o(x)$ denote the order of the element $x$ and we let ${\bf I}(R):=\{x\in R\mid o(x)\le 2\}$ be the set of elements of $R$ having order at most $2$. We let $\mathbf{c}(R)$ denote the fraction $(|R|+|{\bf I}(R)|)/2$, that is,
$$\mathbf{c}(R)=\frac{|R|+|{\bf I}(R)|}{2}.$$
Given a subset $X$ of $R$, we write ${\bf I}(X):=X\cap {\bf I}(R)$. Finally, we denote by $\Z R$ the centre of $R$.}
\end{definition}

\begin{lemma}\label{lemma111}Let $R$ be a finite group. The number of inverse-closed subsets $S$ of $R$ is $2^{\mathbf{c}(R)}.$ 
\end{lemma}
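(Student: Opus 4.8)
The plan is to count inverse-closed subsets of $R$ by partitioning the group according to the involution-pairing induced by inversion. First I would observe that the map $x \mapsto x^{-1}$ is a fixed-point-free-up-to-involutions operation on $R$: it fixes exactly the elements of ${\bf I}(R)$ (those with $x=x^{-1}$, i.e.\ $o(x)\le 2$) and pairs up the remaining $|R|-|{\bf I}(R)|$ elements into two-element orbits $\{x,x^{-1}\}$ with $x\ne x^{-1}$. Thus the elements of $R$ are partitioned into $|{\bf I}(R)|$ singleton orbits and $(|R|-|{\bf I}(R)|)/2$ doubleton orbits under the action of the order-two map $x\mapsto x^{-1}$.

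The key step is then to characterize inverse-closed subsets in terms of these orbits. A subset $S\subseteq R$ satisfies $S^{-1}=S$ precisely when $S$ is a union of these inversion-orbits: if $x\in S$ then $x^{-1}\in S$, so membership in $S$ is constant on each orbit $\{x,x^{-1}\}$, and conversely any union of orbits is manifestly inverse-closed. Therefore choosing an inverse-closed $S$ amounts to making an independent binary choice (in or out) for each orbit. Counting the total number of orbits gives
\[
|{\bf I}(R)| + \frac{|R|-|{\bf I}(R)|}{2} = \frac{2|{\bf I}(R)| + |R| - |{\bf I}(R)|}{2} = \frac{|R|+|{\bf I}(R)|}{2} = \mathbf{c}(R),
\]
using Definition~\ref{defeq:1}. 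Since the choices across orbits are independent and each orbit contributes a factor of $2$, the number of inverse-closed subsets is $2^{\mathbf{c}(R)}$, as claimed.

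I do not anticipate any serious obstacle here; this is a routine orbit-counting argument and the only point requiring a moment's care is the verification that the inversion map is a well-defined involution on $R$ whose fixed points are exactly ${\bf I}(R)$, which is immediate from the fact that $x=x^{-1}$ if and only if $x^2=1$, i.e.\ $o(x)\le 2$. The arithmetic reconciling the count of orbits with the defined quantity $\mathbf{c}(R)$ is the only computation, and it is elementary.
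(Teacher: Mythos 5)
Your proof is correct and is essentially the same as the paper's: both arguments split $R$ into the fixed points ${\bf I}(R)$ of the inversion map and the two-element pairs $\{x,x^{-1}\}$ on the rest, and count an independent binary choice per orbit to get $2^{|{\bf I}(R)|}\cdot 2^{(|R|-|{\bf I}(R)|)/2}=2^{\mathbf{c}(R)}$. Your phrasing via orbits of the involution $x\mapsto x^{-1}$ is just a slightly more formal dressing of the identical idea.
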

\begin{proof}
 Given an arbitrary inverse-closed subset $S$ of $R$, $S\cap {\bf I}(R)$ is an arbitrary subset of ${\bf I}(R)$ whereas in $S\cap (R\setminus {\bf I}(R))$ the elements come in pairs, where each element is paired up to its inverse. Thus the number of inverse-closed subsets of $R$ is $$2^{|{\bf I}(R)|}\cdot 2^{\frac{|R\setminus {\bf I}(R)|}{2}}=2^{{\bf c}(R)}.\qedhere$$
\end{proof}
 
\begin{definition}\label{defeq:2}{\rm Let $R$ be a finite group. Given an automorphism $\varphi$ of  $R$, we set
\begin{align*}
\cent R \varphi&:=\{x\in R\mid x^\varphi=x\},\\
\cent R \varphi^{-1}&:=\{x\in R\mid x^\varphi=x^{-1}\}.\\
\end{align*}
Observe that, when $\varphi=id_R$ is the identity automorphism of $R$, $\cent R{\varphi}^{-1}={\bf I}(R)$.

Given  $x\in R$, we denote by $\iota_x:R\to R$ the inner automorphism of $R$ induced by $x$, that is, $m^{\iota_x}=xmx^{-1}$, for every $m\in R$. (Usually, the automorphism $\iota_x$ is defined by $m\mapsto m^{\iota_x}=x^{-1}mx$, however for our application it is more convenient to define $\iota_x$ by $m\mapsto m^{\iota_x}=xmx^{-1}$.) When $A\unlhd R$, we still denote by $\iota_x$ the restriction to $A$ of the automorphism $\iota_x$, this makes the notation not too cumbersome to use and hopefully will cause no confusion.

Finally, we let $\iota:R\to R$ be the permutation defined by $x^\iota=x^{-1}$, for every $x\in R$. In particular, when $R$ is abelian, $\iota$ is an automorphism of $R$. Furthermore, $\iota=id_R$ if and only if $R$ is an abelian group of exponent at most $2$.
}
\end{definition}

\begin{definition}\label{defeq:2}{\rm
Let $A$ be an abelian group of even order and of exponent greater than $2$, and let $y$ be an involution of $A$. The \emph{generalised dicyclic group} $\Dic(A,y,x)$ is the group $\langle A,x\mid x^2=y, a^x=a^{-1}, \forall a \in A\rangle$. A group is called \emph{generalised dicyclic} if it is isomorphic to some $\Dic(A,y,x)$. When $A$ is cyclic, $\Dic(A,y,x)$ is called a \emph{dicyclic} or \emph{generalised quaternion group}.

We let $\bar{\iota}_A:\Dic(A,y,x)\to \Dic(A,y,x)$ be the mapping defined by $(ax)^{\bar{\iota}_A}=ax^{-1}$ and $a^{\bar{\iota}_A}=a$, for every $a\in A$. In particular, $\bar{\iota}_A$ is an automorphism of $\Dic(A,y,x)$.
The role of the label ``$A$'' in $\bar{\iota}_A$ seems unnecessary, however we use this label to stress one important fact. An abstract group $R$ might be isomorphic to $\Dic(A,y,x)$, for various choices of $A$. Therefore, since the automorphism $\bar{\iota}_A$ depends on $A$ and since we might have more than one choice of $A$, we prefer a notation that emphasizes this fact.}
\end{definition}
\begin{lemma}\label{l:aut0}
Let $R$ be a finite group and let $\varphi$ be an automorphism of $R$ with $|R:\cent R\varphi|=2$. Then one of the following holds:
\begin{enumerate}
\item\label{eq:aut0} $\frac{1}{4}(|R|+|{\bf I}(R)|+|\cent R\varphi|+|\cent R\varphi^{-1}|)\le \mathbf{c}(R)-\frac{|R|}{32}$, 
\item\label{eq:aut1} $R$ is generalized dicyclic over the abelian group $\cent R\varphi$ and $\varphi=\bar{\iota}_{\cent R\varphi}$,
\item\label{eq:aut2} $R$ is abelian of exponent greater than $2$ and $\varphi=\iota$.
\end{enumerate}
\end{lemma}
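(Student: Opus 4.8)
The plan is to show that if alternative~\eqref{eq:aut0} fails then one of \eqref{eq:aut1}, \eqref{eq:aut2} must hold, by first converting the failure of the numerical inequality into a strong ``most elements are inverted'' condition and then invoking a classical theorem on automorphisms that invert many elements. Write $C := \cent R\varphi$, so $|R : C| = 2$, and fix $x \in R \setminus C$, so that $R = C \cup Cx$. Since $\varphi$ fixes $C$ pointwise, an element $c \in C$ satisfies $c^\varphi = c^{-1}$ if and only if $c = c^{-1}$; hence $\cent R\varphi^{-1} \cap C = {\bf I}(C) \subseteq {\bf I}(R)$, giving $|\cent R\varphi^{-1} \cap C| \le |{\bf I}(R)|$. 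Using $|C| = |R|/2$ and $\mathbf{c}(R) = (|R| + |{\bf I}(R)|)/2$, a direct computation shows that the negation of \eqref{eq:aut0} is equivalent to $|\cent R\varphi^{-1}| > \frac{3}{8}|R| + |{\bf I}(R)|$; subtracting off the contribution from $C$ forces
\[
|\cent R\varphi^{-1} \cap Cx| > \tfrac{3}{8}|R| = \tfrac34 |Cx|.
\]
In particular $Cx$ contains an element inverted by $\varphi$; fix such a $g_0 \in Cx$, so $g_0^\varphi = g_0^{-1}$.

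The next step transfers this to an inversion statement inside $C$. For $c \in C$ one computes $(c g_0)^\varphi = c^\varphi g_0^\varphi = c g_0^{-1}$, while $(cg_0)^{-1} = g_0^{-1}c^{-1}$; hence $cg_0 \in \cent R\varphi^{-1}$ if and only if $g_0 c g_0^{-1} = c^{-1}$, that is $c \in \cent C{\iota_{g_0}}^{-1}$, where $\iota_{g_0}$ denotes conjugation by $g_0$ (an automorphism of the normal subgroup $C$). Thus $c \mapsto cg_0$ is a bijection from $\cent C{\iota_{g_0}}^{-1}$ onto $\cent R\varphi^{-1} \cap Cx$, and the displayed inequality becomes $|\cent C{\iota_{g_0}}^{-1}| > \frac34 |C|$. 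At this point I invoke the theorem of Liebeck and MacHale: if an automorphism of a finite group inverts more than three quarters of its elements, then the group is abelian and the automorphism is the inversion map. Applied to $\iota_{g_0}$ acting on $C$, this yields that $C$ is abelian and that $g_0$ inverts $C$ by conjugation; consequently $\cent C{\iota_{g_0}}^{-1} = C$ and the entire coset $Cx = Cg_0$ is inverted by $\varphi$.

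It remains to read off the group structure. Set $y := g_0^2 \in C$. Since $g_0$ commutes with $y$ while also inverting it, $y = y^{-1}$, so $y^2 = 1$; moreover $y \ne 1$, for otherwise $g_0$ would be an involution with $g_0^\varphi = g_0^{-1} = g_0$, placing $g_0 \in \cent R\varphi = C$, a contradiction. I then split on the exponent of $C$. If $C$ has exponent greater than $2$, then $C$ is abelian of even order, $y$ is an involution, $g_0^2 = y$ and $a^{g_0} = a^{-1}$ for all $a \in C$, which is exactly the presentation of the generalised dicyclic group $\Dic(C, y, g_0)$; comparing the action of $\varphi$ (the identity on $C$, sending $cg_0 \mapsto cg_0^{-1}$) with Definition~\ref{defeq:2} gives $\varphi = \bar{\iota}_{\cent R\varphi}$, which is \eqref{eq:aut1}. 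If instead $C$ has exponent at most $2$, then conjugation by $g_0$ is trivial, so $R$ is abelian; as $g_0$ has order $4$ (because $y \ne 1$), the group $R$ has exponent greater than $2$, and one checks directly that $\varphi$ coincides with the inversion map $\iota$, which is \eqref{eq:aut2}.

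The main obstacle is the external input, namely the sharp $\tfrac34$ inversion theorem of Liebeck and MacHale; the slack $|R|/32$ built into \eqref{eq:aut0} is calibrated precisely so that the failure of \eqref{eq:aut0} delivers the strict bound $|\cent C{\iota_{g_0}}^{-1}| > \frac34|C|$ needed to trigger it. The remaining care lies in the boundary bookkeeping: verifying $y \ne 1$ to exclude the degenerate case $\varphi = \mathrm{id}_R$, and correctly separating the abelian and generalised dicyclic conclusions according to the exponent of $C$.
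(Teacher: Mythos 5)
Your proof is correct and follows essentially the same route as the paper's: the same decomposition $R=\cent R\varphi\cup\cent R\varphi\, x$, the same transfer of the inversion condition on the nontrivial coset to the conjugation automorphism $\iota_{g_0}$ of $\cent R\varphi$, the same appeal to the classical $3/4$-inversion theorem, and the same final structure analysis splitting on the exponent of $\cent R\varphi$. The only difference is organizational: you argue by contrapositive, converting the failure of~\eqref{eq:aut0} once and for all into the bound $|\cent R\varphi^{-1}|>\tfrac{3}{8}|R|+|{\bf I}(R)|$, which lets you bypass the paper's separate verification of~\eqref{eq:aut0} in each remaining subcase (including the subcase ${\bf I}(R)\nsubseteq{\bf I}(\cent R\varphi)$, which your order-$4$ observation for $g_0$ shows cannot actually occur).
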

\begin{proof}
For simplicity, we let $A:=\cent R\varphi$ and we let $o$ denote the left-hand side in~\eqref{eq:aut0}.

 Suppose that $\cent R\varphi^{-1}\subseteq A$. Then $\cent R\varphi^{-1}=\cent A\varphi^{-1}=\cent A{id_A}^{-1}={\bf I}(A)$. Thus
\begin{eqnarray*}
o=\frac{1}{4}\left(|R|+|{\bf I}(R)|+\frac{|R|}{2}+|{\bf I}(A)|\right)\le \frac{1}{4}\left(\frac{3}{2}|R|+2|{\bf I}(R)|\right)=\frac{|R|+|{\bf I}(R)|}{2}-\frac{|R|}{8}=\mathbf{c}(R)-\frac{|R|}{8},
\end{eqnarray*}
and~\eqref{eq:aut0} holds in this case.

Suppose that $\cent R\varphi^{-1}\nsubseteq A$. In particular, there exists $x\in R\setminus A$ with $x^\varphi=x^{-1}$. As $|R:A|=2$, we have $R=A\cup Ax$. For every $a\in A$, since $\varphi$ is an automorphism of $R$ fixing point-wise $A$ and since $xax^{-1}\in A$, we deduce
$$xax^{-1}=(xax^{-1})^\varphi=x^\varphi a^\varphi (x^{-1})^\varphi=x^{-1}ax$$
and hence $x^2a=ax^2$, that is, $x^2\in \Z {\langle A,x\rangle}=\Z R$. As $x^2\in A$, we have $x^2=(x^2)^\varphi=(x^\varphi)^2=(x^{-1})^2=x^{-2}$, that is, $x^4=1$. Summing up,
\begin{equation}\label{eq:art}
x^2\in\Z R,\quad x^4=1.
\end{equation}

Now, let $y\in\cent R\varphi^{-1}\setminus A$. Then, $y=ax$, for some $a\in A$. Moreover, $y^\varphi=y^{-1}=(ax)^{-1}=x^{-1}a^{-1}$ and $y^\varphi=(ax)^\varphi=a^\varphi x^{\varphi}=ax^{-1}$. Thus 
$$x^{-1}a^{-1}=ax^{-1},$$
that is, $xax^{-1}=a^{-1}$. Recall that $\iota_x:A\to A$ is the restriction to the normal subgroup $A$ of the  inner automorphism of $R$ determined by $x$, that is, $a^{\iota_x}=xax^{-1}$, for every $a\in A$. We have shown that  $\cent R\varphi^{-1}\setminus A=\cent A{\iota_x}^{-1}x$. As $\cent R\varphi^{-1}\cap A={\bf I}(A)$, we get 
\begin{equation}\label{eq:art1}
\cent R\varphi^{-1}={\bf I}(A)\cup \cent A{\iota_x}^{-1}x
\end{equation} and $|\cent R\varphi^{-1}|=|{\bf I}(A)|+|\cent A{\iota_x}^{-1}|$.

Suppose that $|\cent A{\iota_x}^{-1}|\le 3|A|/4$.  Thus, by~\eqref{eq:art1}, we have
\begin{eqnarray*}
o&=&\frac{1}{4}\left(\frac{3}{2}|R|+|{\bf I}(R)|+|{\bf I}(A)|+|\cent A{\iota_x}^{-1}|\right)\le \frac{1}{4}\left(\frac{3}{2}|R|+2|{\bf I}(R)|+\frac{3|A|}{4}\right)\\
&=&\frac{1}{4}\left(\frac{3}{2}|R|+2|{\bf I}(R)|+\frac{3|R|}{8}\right)=
\frac{|R|+|{\bf I}(R)|}{2}-\frac{|R|}{32}=\mathbf{c}(R)-\frac{|R|}{32},
\end{eqnarray*}
and~\eqref{eq:aut0} holds in this case.

Suppose that $|\cent A{\iota_x}^{-1}|>3|A|/4$, that is, the automorphism $\iota_x$ of $A$ inverts more than $3/4$ of its elements. By a result of Miller~\cite{Miller}, $A$ is abelian. Since $A$ is abelian, it is easy to verify that $\cent A{\iota_x}^{-1}$ is a subgroup of $A.$ As $|\cent A{\iota_x}^{-1}|>3|A|/4$, we get $\cent A{\iota_x}^{-1}=A$ and $\iota_x$ acts on $A$ inverting each of its elements. From~\eqref{eq:art1}, we have 
\begin{equation}\label{eq:vania}\cent R\varphi^{-1}={\bf I}(A)\cup Ax.
\end{equation}
 If ${\bf I}(R)\subseteq {\bf I}(A)$, then no element in $Ax$ is an involution and hence $x$ has order $4$ from~\eqref{eq:art}. When $A$ has exponent greater than $2$, we deduce $R\cong \Dic(A,x^2,x)$ is a generalized dicyclic group over $A$, $\varphi=\bar{\iota}_A$ and~\eqref{eq:aut1} holds in this case. When $A$ has exponent at most $2$, we have ${\bf I}(A)=A$ and $\varphi=\iota$. Hence ${\bf I}(R)=A$, $R$ is an abelian group of exponent greater than $2$ and~\eqref{eq:aut2} holds in this case.
Therefore, we may suppose  ${\bf I}(R)\nsubseteq {\bf I}(A)$.

Let $x'\in {\bf I}(R)\setminus A$. Then, $x'=ax$, for some $a\in A$. Then $1=x'^2=(ax)^2=axax=a(xax^{-1})x^2=aa^{-1}x^2=x^2$ and hence $x^2=1$. Now, for every $b\in A$, we have $(bx)^2=bxbx=b(xbx^{-1})=bb^{-1}=1$. This shows ${\bf I}(R)\setminus A=Ax$. Therefore, ${\bf I}(R)={\bf I}(A)\cup Ax$ and hence ${\bf I}(R)=\cent R\varphi^{-1}$ from~\eqref{eq:vania}. We deduce
\begin{eqnarray*}
o&=&\frac{1}{4}\left(\frac{3|R|}{4}+|{\bf I}(R)|+|\cent R\varphi^{-1}|\right)=\frac{1}{4}\left(\frac{3}{2}|R|+2|{\bf I}(R)|\right)=\frac{|R|+|{\bf I}(R)|}{2}-\frac{|R|}{8}=\mathbf{c}(R)-\frac{|R|}{8},
\end{eqnarray*}
and~\eqref{eq:aut0} holds in this case.
\end{proof}

\begin{lemma}\label{l:aut-1}
Let $R$ be a finite group and let $\varphi$ be an automorphism of $R$ with $|R:\cent R\varphi|=3$. Then one of the following holds:
\begin{enumerate}
\item\label{eq:aut-10} $\frac{1}{4}(|R|+|{\bf I}(R)|+|\cent R\varphi|+|\cent R\varphi^{-1}|)\le \mathbf{c}(R)-\frac{|R|}{96}$, 
\item\label{eq:aut-11} $R$ is abelian of exponent greater than $2$ and $\varphi=\iota$.
\end{enumerate}
\end{lemma}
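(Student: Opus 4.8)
The plan is to follow the strategy of Lemma~\ref{l:aut0}, writing $A:=\cent R\varphi$ (now of index $3$) and letting $o$ denote the left-hand side of~\eqref{eq:aut-10}. The new difficulty is that a subgroup of index $3$ need not be normal, so the clean description $\cent R\varphi^{-1}\setminus A=\cent A{\iota_x}^{-1}x$ available in the index-$2$ case is no longer at hand; producing a usable substitute is the crux. First I would record two elementary facts: $\cent R\varphi^{-1}\cap A={\bf I}(A)$, and $\cent R\varphi^{-1}\subseteq\cent R{\varphi^2}$ (if $x^\varphi=x^{-1}$ then $x^{\varphi^2}=x$). Since $A\le\cent R{\varphi^2}\le R$ and $|R:A|=3$ is prime, $\cent R{\varphi^2}\in\{A,R\}$. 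If $\cent R{\varphi^2}=A$, then $\cent R\varphi^{-1}={\bf I}(A)\subseteq A$, and feeding $|A|=|R|/3$ and $|{\bf I}(A)|\le|{\bf I}(R)|$ into $o$ yields $o\le\mathbf{c}(R)-|R|/6$, so~\eqref{eq:aut-10} holds comfortably.

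The remaining case is $\varphi^2=\mathrm{id}$, i.e.\ $\varphi$ is an involutory automorphism. Here the observation that replaces the index-$2$ computation is that \emph{two elements of $\cent R\varphi^{-1}$ lying in the same right coset of $A$ have the same square}: if $g,g'\in\cent R\varphi^{-1}$ and $b:=g(g')^{-1}\in A$, then applying $\varphi$ and using $b^\varphi=b$ gives $g^{-1}g'=g(g')^{-1}$, whence $g^2=g'^2$. Writing the two non-trivial cosets as $C_1,C_2$ and $N_i:=\cent R\varphi^{-1}\cap C_i$, this shows each $N_i$ consists of elements sharing a single common square, so $|N_i|\le|A|=|R|/3$ and $|\cent R\varphi^{-1}|\le|{\bf I}(A)|+\tfrac23|R|$. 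A direct computation (matching the threshold at which~\eqref{eq:aut-10} becomes an equality) then shows that~\eqref{eq:aut-10} can fail only when $|\cent R\varphi^{-1}|>\tfrac58|R|+|{\bf I}(R)|$, and in that regime $|N_1|,|N_2|>\tfrac{7}{24}|R|=\tfrac78|A|>\tfrac34|A|$; so I may assume $N_1$ is very large.

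It remains to show that this extremal regime forces alternative~\eqref{eq:aut-11}. Fix $g_0\in N_1$ and set $D:=\{g g_0^{-1}\mid g\in N_1\}\subseteq A$, a set of size $|N_1|$. The square-equality above gives $g_0 b g_0^{-1}=b^{-1}$ for every $b\in D$, hence $g_0 D g_0^{-1}=D^{-1}\subseteq\langle D\rangle$, so $g_0$ normalises $\langle D\rangle$; and since $|D|=|N_1|>\tfrac12|A|$ we get $\langle D\rangle=A$. Thus $g_0$ normalises $A$, so $A\unlhd R=\langle A,g_0\rangle$ — this is exactly the substitute for the missing normality. Now $\iota_{g_0}$ restricts to an automorphism of $A$ with $D\subseteq\cent A{\iota_{g_0}}^{-1}$ and $|D|>\tfrac34|A|$, so by Miller's theorem~\cite{Miller} $A$ is abelian and $\iota_{g_0}$ inverts \emph{every} element of $A$. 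Finally, since $|R:A|=3$ we have $g_0^3\in A$, and conjugation by $g_0^3$ on $A$ is simultaneously trivial (as $g_0^3\in A$ and $A$ is abelian) and equal to inversion (being the cube of the order-$2$ inverting automorphism $\iota_{g_0}$); hence inversion is trivial on $A$, i.e.\ $A$ has exponent at most $2$. Consequently $g_0$ centralises $A$, the group $R=\langle A,g_0\rangle$ is abelian of exponent greater than $2$, and since $\varphi$ fixes $A$ pointwise and sends $g_0$ to $g_0^{-1}$ one checks on generators that $\varphi=\iota$, giving~\eqref{eq:aut-11}. The main obstacle is precisely the non-normality of $A$; the device overcoming it is that $g_0 D g_0^{-1}=D^{-1}$ forces $\langle D\rangle=A$ to be $g_0$-invariant, after which Miller's theorem together with the order-$3$ constraint $g_0^3\in A$ closes the argument and explains why (unlike the index-$2$ case) no generalised dicyclic exception survives.
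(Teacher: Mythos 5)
Your argument is correct, and it takes a genuinely different (and arguably cleaner) route than the paper's. The paper writes $R=A\cup Ax\cup Ax'$ with $x,x'\in\cent R\varphi^{-1}$ and then splits according to whether $A=\cent R\varphi$ is normal in $R$; the non-normal branch is handled by passing to the core $K$ of $A$, using that $R/K$ is dihedral of order $6$, and analysing $\cent R\varphi^{-1}$ over the four $K$-cosets outside $A$ -- this is where the constant $|R|/96$ originates. You sidestep that case distinction twice over: first via the prime-index observation $\cent R{\varphi^2}\in\{A,R\}$, which either gives $\cent R\varphi^{-1}={\bf I}(A)$ (and a bound far better than needed) or forces $\varphi^2=id_R$; and second, in the extremal regime, via the set $D=N_1g_0^{-1}\subseteq A$ with $g_0Dg_0^{-1}=D^{-1}$ and $|D|>|A|/2$, which forces $\langle D\rangle=A$ to be normalised by $g_0$ and hence normal in $R$ -- so the paper's troublesome non-normal branch simply cannot occur once the counting bound fails. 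Your thresholds check out: given $|\cent R\varphi|=|R|/3$, the inequality in part (1) fails exactly when $|\cent R\varphi^{-1}|>\tfrac58|R|+|{\bf I}(R)|$, and then $|N_1|>\tfrac58|R|-|{\bf I}(A)|+|{\bf I}(R)|-|A|\ge\tfrac58|R|-\tfrac13|R|=\tfrac78|A|>\tfrac34|A|$, which is what Miller's theorem needs; the square-equality computation and the step $g_0^3\in A$ forcing $A$ to have exponent at most $2$ are both sound. The one cosmetic point is the assertion that $R$ has exponent greater than $2$: this should be justified by noting that $\varphi=\iota$ and $\varphi\ne id_R$ (since $|R:\cent R\varphi|=3$), so $\iota\ne id_R$. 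What the paper's approach buys is sharper explicit constants in each sub-case; what yours buys is brevity and a structural explanation of why, unlike the index-$2$ situation of Lemma~\ref{l:aut0}, no generalised dicyclic exception can arise at index $3$.
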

\begin{proof}
For simplicity, we let $A:=\cent R\varphi$ and we let $o$ denote the left-hand side in~\eqref{eq:aut0}. As $|R:A|=3$, we may write $R=A\cup Ax\cup Ax'$, for some $x,x'\in R$.

Suppose that $\cent R\varphi^{-1}\subseteq A\cup Ay$, for some $y\in \{x,x'\}$. Then $$\cent R\varphi^{-1}=(\cent R\varphi^{-1}\cap A)\cup (\cent {R}\varphi^{-1}\cap Ay)={\bf I}(A)\cup (\cent {R}\varphi^{-1}\cap Ay)\subseteq {\bf I}(A)\cup Ay,$$ because $\varphi$ fixes each element of $A$. Thus
 $|\cent R\varphi^{-1}|\le |{\bf I}(R)|+|A|$ and 
\begin{eqnarray*}
o\le\frac{1}{4}\left(\frac{4}{3}|R|+|{\bf I}(R)|+|{\bf I}(R)|+|A|\right)\le \frac{1}{4}\left(\frac{4}{3}|R|+2|{\bf I}(R)|+\frac{|R|}{3}\right)=\frac{|R|+|{\bf I}(R)|}{2}-\frac{|R|}{12}=\mathbf{c}(R)-\frac{|R|}{12},
\end{eqnarray*}
and~\eqref{eq:aut-10} holds in this case.

Therefore we may suppose that $\cent R\varphi^{-1}\cap Ax\ne\emptyset$ and $\cent R\varphi^{-1}\cap Ax'\ne \emptyset$. In particular, replacing $x$ and $x'$ if necessary, we may suppose that $x,x'\in\cent R\varphi^{-1}$, that is, $x^\varphi=x^{-1}$ and $x'^\varphi=x'^{-1}$.

\smallskip

\noindent\textsc{Case: $A\unlhd R$.}

\smallskip

\noindent As $R/A$ is cyclic of order $3$, we may assume that $x'=x^{-1}$ and that $x$ has odd order. For every $a\in A$, we have 
$xax^{-1}\in A$ and hence
$$xax^{-1}=(xax^{-1})^\varphi=x^\varphi a^\varphi(x^{-1})^\varphi=x^{-1}ax,$$
that is, $x^2a=ax^2$. Therefore $x^2\in \Z{\langle x,A\rangle}=\Z R$. As $x$ has odd order, we deduce $x\in \Z R$. From this it is easy to deduce that 
\begin{equation}\label{coffee}
\cent R\varphi^{-1}={\bf I}(A)\cup {\bf I}(A)x\cup {\bf I}(A)x^{-1}.
\end{equation} 

Assume that $|{\bf I}(A)|\le 3|A|/4$. Thus, by~\eqref{coffee}, we have
\begin{eqnarray*}
o&=&\frac{1}{4}\left(\frac{4}{3}|R|+|{\bf I}(R)|+|{\bf I}(A)|+|{\bf I}(A)|+|{\bf I}(A)|\right)\le \frac{1}{4}\left(\frac{4}{3}|R|+2|{\bf I}(R)|+2|{\bf I}(A)|\right)\\
&\le&\frac{1}{4}\left(\frac{4}{3}|R|+2|{\bf I}(R)|+2\frac{3|A|}{4}\right) =
\frac{1}{4}\left(\frac{4}{3}|R|+2|{\bf I}(R)|+\frac{|R|}{2}\right)=\frac{1}{4}\left(\frac{11}{6}|R|+2|{\bf I}(R)|\right)\\
&=&
\frac{|R|+|{\bf I}(R)|}{2}-\frac{|R|}{24}=\mathbf{c}(R)-\frac{|R|}{24},
\end{eqnarray*}
and~\eqref{eq:aut-10} holds in this case. 

Assume that $|{\bf I}(A)|>3|A|/4$. By~\cite{Miller}, $A$ is abelian. Thus ${\bf I}(A)$ is a subgroup of $A$ with $|{\bf I}(A)|>3|A|/4$. It follows that  $A$ is an elementary abelian $2$-group. As $x\in \Z R$, we deduce that $R$ is abelian and $\varphi=\iota$; thus~\eqref{eq:aut-11} holds in this case.

\smallskip

\noindent\textsc{Case: $A$ is not normal in $R$.}

\smallskip

\noindent Let $K$ be the core of $A$ in $R$. Then $|R:K|=6$ and $R/K$ is isomorphic to the dihedral group of order $6$. Suppose that $\cent R\varphi^{-1}\cap Ky= \emptyset$, for some $y\in R\setminus A$. As $R\setminus A$ is the union of four $K$-cosets and as $\cent R\varphi^{-1}\cap Ky= \emptyset$, we deduce $|\cent R\varphi^{-1}\cap (R\setminus A)|\le 3|K|$. As $\cent R\varphi^{-1}\cap A={\bf I}(A)$, we get $|\cent R\varphi^{-1}|=|\cent R\varphi^{-1}\cap A|+|\cent R\varphi^{-1}\cap (R\setminus A)|\le |{\bf I}(A)|+3|K|$ and hence
\begin{eqnarray*}
o&\le&\frac{1}{4}\left(\frac{4}{3}|R|+|{\bf I}(R)|+|{\bf I}(A)|+3|K|\right)\le \frac{1}{4}\left(\frac{4}{3}|R|+2|{\bf I}(R)|+3\frac{|R|}{6}\right)\\
&=&\frac{1}{4}\left(\frac{11}{6}|R|+2|{\bf I}(R)|\right)=
\frac{|R|+|{\bf I}(R)|}{2}-\frac{|R|}{24}=\mathbf{c}(R)-\frac{|R|}{24},
\end{eqnarray*}
and~\eqref{eq:aut-10} holds in this case. Thus we may suppose $\cent R\varphi^{-1}\cap Ky\ne \emptyset$, for every $y\in R\setminus A$.

Let $x_1,x_2,x_3,x_4\in R\setminus A$ with $R=A\cup Kx_1\cup Kx_2\cup Kx_3\cup Kx_4$ and with $x_1,x_2,x_3,x_4\in \cent R\varphi^{-1}$. As usual we denote by $\iota_{x_i}:K\to K$ the automorphism of $K$ defined by $k^{\iota_{x_i}}=x_ikx_i^{-1}$, for every $k\in K$. For each $i\in\{1,\ldots,4\}$, let $y\in \cent R\varphi^{-1}\cap Kx_i$. Then $y=kx_i$, for some $k\in K$ and hence
$x_i^{-1}k^{-1}=(kx_i)^{-1}=y^{-1}=y^\varphi=(kx_i)^\varphi=k^\varphi x_i^\varphi=kx_i^{-1}$, that is, $x_ikx_i^{-1}=k^{-1}$ and $k\in \cent K{\iota_{x_i}}^{-1}$. This shows 
\begin{equation}\label{eq:art2}\cent R\varphi^{-1}={\bf I}(A)\cup \cent K{\iota_{x_1}}^{-1}x_1\cup\cent K{\iota_{x_2}}^{-1}x_2\cup\cent K{\iota_{x_3}}^{-1}x_3\cup\cent K{\iota_{x_4}}^{-1}x_4.
\end{equation}

Suppose that $|\cent {K}{\iota_{x_i}}^\varphi|\le 3|K|/4$, for some $i\in \{1,2,3,4\}$. Then
$$|\cent R\varphi^{-1}|\le |{\bf I}(A)|+3|K|+\frac{3|K|}{4}=|{\bf I}(A)|+\frac{15|K|}{4}=|{\bf I}(A)|+\frac{5|R|}{8}.$$
Thus
\begin{eqnarray*}
o&\le &\frac{1}{4}\left(\frac{4}{3}|R|+|{\bf I}(R)|+|{\bf I}(A)|+\frac{5|R|}{8}\right)\le \frac{1}{4}\left(\frac{47}{24}|R|+2|{\bf I}(R)|\right)\\
&=&
\frac{|R|+|{\bf I}(R)|}{2}-\frac{|R|}{96}=\mathbf{c}(R)-\frac{|R|}{96},
\end{eqnarray*}
and~\eqref{eq:aut-10} holds in this case. Therefore, we may suppose that $|\cent K {\iota_{x_i}}^{-1}|>3|K|/4$, for each 
$i\in \{1,2,3,4\}$. The work of Miller~\cite{Miller} shows that $K$ is abelian and that, for every $i\in \{1,2,3,4\}$, $x_i$ acts by conjugation on $K$ by inverting each of its elements.  In particular,~\eqref{eq:art2} becomes
\begin{equation}\label{eq:art3}
\cent R\varphi^{-1}={\bf I}(A)\cup (R\setminus A).
\end{equation}

As $R/K$ is isomorphic to the dihedral group of order $6$, we deduce that there exist $i,j,k\in \{1,2,3,4\}$ with $x_ix_j\in Kx_k$. From the previous paragraph, $x_i,x_j$ and $x_k$ act by conjugation on $K$ by inverting each of its elements. Therefore, for every $y\in K$, we have
$$y^{-1}=y^{x_k}=y^{x_ix_j}=(y^{x_i})^{x_j}=(y^{-1})^{x_j}=y,$$
that is, $y^2=1$. This yields that $K$ is an elementary abelian $2$-group and hence $K\subseteq {\bf I}(A)$. Eq~\eqref{eq:art3} gives $\cent R\varphi^{-1}\supseteq K\cup (R\setminus A)$ and hence $|\cent R\varphi^{-1}|\ge |K|+|R\setminus A|=5|R|/6>3|R|/4$. Again, from the work of Miller~\cite{Miller}, we deduce that $R$ is abelian and $\varphi=\iota$, and~\eqref{eq:aut-11} holds in this case.
\end{proof}

\begin{lemma}\label{l:aut}Let $R$ be a finite group and let $\varphi$ be a non-identity automorphism of $R$. Then, one of the following holds
\begin{enumerate}
\item\label{eq:aut00} the number of $\varphi$-invariant inverse-closed subsets of $R$ is at most $2^{\mathbf{c}(R)-\frac{|R|}{96}}$,
\item\label{eq:aut01}$\cent R\varphi$ is abelian of exponent greater than $2$ and has index $2$ in $R$, $R$ is a generalized dicyclic group over $\cent R\varphi$ and $\varphi=\bar{\iota}_{\cent R\varphi}$,
\item\label{eq:aut02}$R$ is abelian of exponent greater than $2$ and $\varphi=\iota$.
\end{enumerate} 
\end{lemma}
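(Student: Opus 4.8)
The plan is to turn the enumeration into an orbit-counting problem and then reduce to the two indices already settled in Lemmas~\ref{l:aut0} and~\ref{l:aut-1}. First I would note that, since $\varphi$ is an automorphism, it commutes with the inversion permutation $\iota$ of $R$, because $(x^{-1})^\varphi=(x^\varphi)^{-1}$. Hence a subset $S$ is both $\varphi$-invariant and inverse-closed exactly when it is a union of orbits of the abelian permutation group $G:=\langle\varphi,\iota\rangle$ on $R$. Therefore the number of $\varphi$-invariant inverse-closed subsets equals $2^{\omega}$, where $\omega$ is the number of $G$-orbits on $R$, and the statement reduces to showing that $\omega\le \mathbf{c}(R)-\frac{|R|}{96}$, unless we are in the exceptional situations~\eqref{eq:aut01} or~\eqref{eq:aut02}.

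The heart of the argument is the clean bound
\[
\omega\ \le\ o:=\tfrac14\bigl(|R|+|{\bf I}(R)|+|\cent R\varphi|+|\cent R\varphi^{-1}|\bigr),
\]
which I would prove by passing to a quotient action. As $\langle\iota\rangle$ is central in $G$, the $G$-orbits on $R$ correspond bijectively to the orbits of the induced cyclic group $\langle\bar\varphi\rangle$ on the set $\Omega$ of $\langle\iota\rangle$-orbits of $R$, and $|\Omega|=\mathbf{c}(R)$. The elementary estimate for a cyclic group (its fixed points are the singleton orbits, while every other orbit has size at least $2$) gives $\omega\le \tfrac12\bigl(|\Omega|+|\mathrm{Fix}_\Omega(\bar\varphi)|\bigr)$. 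A short computation identifies the fixed orbits: the pair $\{x,x^{-1}\}$ is fixed by $\bar\varphi$ precisely when $x^\varphi\in\{x,x^{-1}\}$, so the fixed $\langle\iota\rangle$-orbits are those lying inside $\cent R\varphi\cup\cent R\varphi^{-1}$; counting singletons and pairs there (the contribution of ${\bf I}(\cent R\varphi)$ cancels) yields $|\mathrm{Fix}_\Omega(\bar\varphi)|=\tfrac12\bigl(|\cent R\varphi|+|\cent R\varphi^{-1}|\bigr)$. Substituting and using $\mathbf{c}(R)=\tfrac12(|R|+|{\bf I}(R)|)$ gives exactly $\omega\le o$, so it now suffices to prove $o\le \mathbf{c}(R)-\frac{|R|}{96}$, equivalently $|\cent R\varphi|+|\cent R\varphi^{-1}|\le |R|+|{\bf I}(R)|-\frac{|R|}{24}$.

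It then remains to bound $o$, and I would split according to the index $k:=|R:\cent R\varphi|\ge 2$. For $k=2$ and $k=3$ the quantity $o$ is literally the left-hand side of~\eqref{eq:aut0} and~\eqref{eq:aut-10}, so Lemmas~\ref{l:aut0} and~\ref{l:aut-1} apply directly: either the required inequality on $o$ holds, giving~\eqref{eq:aut00}, or we land in the generalized dicyclic/abelian conclusions, which are precisely~\eqref{eq:aut01} and~\eqref{eq:aut02}. For $k\ge 4$ one has $|\cent R\varphi|\le |R|/4$, so everything comes down to bounding the inverted set $\cent R\varphi^{-1}$. If $R$ is abelian, $\cent R\varphi^{-1}$ is a subgroup, hence either proper, in which case $|\cent R\varphi^{-1}|\le|R|/2$ and the crude estimate closes the case, or all of $R$, forcing $\varphi=\iota$ and conclusion~\eqref{eq:aut02}. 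If $R$ is non-abelian I would invoke Miller's theorem~\cite{Miller}, exactly as in the proofs of the two preceding lemmas, to control the proportion of elements inverted by $\varphi$.

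I expect the genuine obstacle to lie in this last case $k\ge 4$: the crude combination $|\cent R\varphi|+|\cent R\varphi^{-1}|\le \tfrac14|R|+\tfrac34|R|=|R|$ does not by itself beat $|R|+|{\bf I}(R)|-\frac{|R|}{24}$ when $R$ has few involutions, and the proportion of inverted elements can approach $3/4$. The extremal behaviour is realised by $\bar{\iota}_A$ on a generalized dicyclic group, but there $\cent R\varphi$ has index $2$ rather than index at least $4$; so closing the narrow remaining range should require either a sharper Miller-type estimate for automorphisms inverting many elements, or a structural argument showing that once $k\ge 4$ a fixed positive fraction of elements satisfy $x^\varphi\notin\{x,x^{-1}\}$, which is exactly what is needed to force $|\cent R\varphi|+|\cent R\varphi^{-1}|$ below the threshold.
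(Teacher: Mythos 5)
Your reduction to orbit counting is sound and in fact slightly cleaner than the paper's: by quotienting out the central involution $\iota$ and applying the singleton/non-singleton estimate to the cyclic image $\langle\bar\varphi\rangle$, you obtain the uniform bound $\omega\le o=\frac14\left(|R|+|{\bf I}(R)|+|\cent R\varphi|+|\cent R\varphi^{-1}|\right)$ for an arbitrary non-identity $\varphi$, whereas the paper first reduces to prime order and then splits into cases according to whether $R$ has exponent $2$, whether $p$ is odd, and whether $\varphi=\iota$, arriving at the same quantity $o$ only in the residual case $p=2$, $\varphi\ne\iota$. Your treatment of $|R:\cent R\varphi|\in\{2,3\}$ via Lemmas~\ref{l:aut0} and~\ref{l:aut-1}, of $|R:\cent R\varphi|\ge 5$ via Miller's $3|R|/4$ bound, and of the abelian case (where $\cent R\varphi^{-1}$ is a subgroup) all match the paper and are correct.

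However, the case you flag as the ``genuine obstacle'' is a genuine gap, not a formality, and it is where the paper spends roughly half of its proof. Concretely: when $|R:\cent R\varphi|=4$ and $|{\bf I}(R)|<|R|/24$, the target inequality $|\cent R\varphi|+|\cent R\varphi^{-1}|\le |R|+|{\bf I}(R)|-|R|/24$ is not implied by Miller's bound alone, and one must handle the range $2|R|/3<|\cent R\varphi^{-1}|\le 3|R|/4$. The paper closes this by invoking the Liebeck--MacHale structure theorem for groups admitting an automorphism inverting more than $2/3$ of their elements: this forces $|\cent R\varphi^{-1}|=3|R|/4$ exactly and produces an abelian subgroup $A$ with $|R:A|=|A:\cent Ax|=2$ for every $x\in R\setminus A$. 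One then shows $A$ must be $\varphi$-invariant (otherwise $|\cent R\varphi^{-1}|\le|R|/2$), and splits according to whether $\varphi$ inverts every element of $A$ or every element of $R\setminus A$; in each subcase an explicit computation of $|\cent R\varphi|$ and $|\cent R\varphi^{-1}|$ in terms of $|{\bf I}(A)|$ and $|\cent A{\iota_x}^{-1}|$ shows that either $|{\bf I}(R)|$ is forced to be a positive fraction of $|R|$ or $|\cent R\varphi^{-1}|$ drops strictly below $3|R|/4$, and the bound $o\le\mathbf{c}(R)-|R|/96$ follows. Without this structural input (or a sharpened Miller-type estimate, which you correctly suspect is needed but do not supply), the proof is incomplete precisely in the extremal regime your own estimates isolate.
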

\begin{proof}
Let $p$ be a prime dividing the order of $\varphi$. Clearly, if $S\subseteq R$ is $\varphi$-invariant, then $S$ is also $\varphi^{o(\varphi)/p}$-invariant. Therefore, replacing $\varphi$ by $\varphi^{o(\varphi)/p}$ if necessary, we may suppose that $\varphi$ is an automorphism of $R$ of prime order $p$. 

Recall that $\iota:R\to R$ is the permutation of $R$ defined by $x^\iota=x^{-1}$, for every $x\in R$. Let $H:=\langle \iota,\varphi\rangle\le \Sym(R)$. Clearly, the number of $\varphi$-invariant inverse-closed subsets of $R$ is $2^{o}$, where $o$ is the number of $H$-orbits, that is, $o$ is the number of orbits of $H$ in its action on $R$. From the orbit-counting lemma, we have
\begin{equation}\label{eq:ocl}
o=\frac{1}{|H|}\sum_{h\in H}|\mathrm{Fix}_R(h)|,
\end{equation}
where $\mathrm{Fix}_R(h):=\{x\in R\mid x^h=x\}$ is the fixed-point set of $h$ in its action on $R$. 

For every $x\in R$, we have $x^{\iota \varphi}=(x^{-1})^\varphi=(x^\varphi)^{-1}=x^{\varphi\iota}$ and hence $\iota\varphi=\varphi\iota$. Therefore $H$ is an abelian group. Moreover, $\mathrm{Fix}_{R}(\iota)={\bf I}(R)$ and $\mathrm{Fix}_R(\varphi^\ell)=\cent R\varphi$ for every $\ell\in \{1,\ldots,p-1\}$.

\smallskip

\noindent\textsc{Case 1: }$R$ is abelian of exponent at most $2$.

\smallskip

\noindent As $R$ has exponent at most $2$, $\iota$ is the identity permutation and hence $H=\langle\varphi\rangle$ is cyclic of prime order $p$. From~\eqref{eq:ocl} and from the fact that $|\cent R\varphi|\le |R|/2$, we obtain
$$o=\frac{1}{p}(|R|+(p-1)|\cent R\varphi|)\le 
\frac{1}{p}\left(|R|+(p-1)\frac{|R|}{2}\right)
=\frac{(p+1)|R|}{2p}\le\frac{3|R|}{4}=|R|-\frac{|R|}{4}$$
and the lemma follows in this case because $\mathbf{c}(R)=(|R|+|{\bf I}(R)|)/2=|R|$.

\smallskip

In particular, for the rest of the argument we suppose that $R$ has exponent greater than $2$. Thus $H$ has order $2p$.

\smallskip

\noindent\textsc{Case 2: }$p$ is odd.

\smallskip

\noindent As $H$ is abelian of order $2p$, we deduce that $H$ is cyclic  and $\mathrm{Fix}_R(\iota\varphi^{\ell})=\cent R {\varphi^\ell}\cap \mathrm{Fix}_R(\iota)=\cent R\varphi\cap  {\bf I}(R)$, for every $\ell\in \{1,\ldots,p-1\}$. Now,~\eqref{eq:ocl} yields (in the second inequality we are using the fact that $\varphi$ is not the identity automorphism and hence $|\cent R\varphi|\le |R|/2$)
\begin{eqnarray*}
o&=&\frac{1}{2p}\left(|R|+|{\bf I}(R)|+(p-1)|\cent R \varphi|+(p-1)|\cent R\varphi\cap {\bf I}(R)|\right)\\
&\le&\frac{1}{2p}\left(|R|+|{\bf I}(R)|+(p-1)|\cent R \varphi|+(p-1)|{\bf I}(R)|\right)=\frac{|R|+|{\bf I}(R)|}{2}-\frac{|R|}{2}+\frac{1}{2p}\left(|R|+(p-1)|\cent R \varphi|\right)\\
&\le &\mathbf{c}(R)-\frac{|R|}{2}+\frac{1}{2p}\left(|R|+(p-1)\frac{|R|}{2}\right)=
\mathbf{c}(R)-|R|\left(\frac{1}{2}-\frac{p+1}{4p}\right)\\
&=&\mathbf{c}(R)-|R|\frac{p-1}{4p}\le \mathbf{c}(R)-\frac{|R|}{6}.
\end{eqnarray*}

\smallskip

For the rest of the argument we may suppose $p=2$. If $\varphi=\iota$, then $R$ is an abelian group of exponent greater than $2$ and we obtain that part~\eqref{eq:aut02} holds in this case. Therefore, we may suppose that $\varphi\ne\iota$. As $H=\langle\varphi,\iota\rangle$ is abelian of order $2p$, we deduce that $H=\{id_R,\iota,\varphi,\iota\varphi\}$ is elementary abelian of order $4$. Moreover, $\mathrm{Fix}_R(\iota)={\bf I}(R)$, $\mathrm{Fix}_R(\varphi)=\cent R\varphi$ and $\mathrm{Fix}_R(\iota\varphi):=\cent R\varphi^{-1}$. Thus
$$o=\frac{1}{4}\left(|R|+|{\bf I}(R)|+|\cent R\varphi|+|\cent R\varphi^{-1}|\right).$$
From Lemmas~\ref{l:aut0} and~\ref{l:aut-1}, we may suppose that $|R:\cent R\varphi|\ge 4$.

Miller~\cite{Miller} has shown that a non-identity automorphism of a non-abelian group inverts at most $3|R|/4$ elements. Therefore, $|\cent R\varphi^{-1}|\le 3|R|/4$. Observe that the same inequality holds when $R$ is abelian because $\cent R\varphi^{-1}$ is a proper subgroup of $R$ and hence $|\cent R\varphi^{-1}|\le |R|/2\le 3|R|/4$. In particular, if $|R:\cent R \varphi|\ge 5$, then we deduce
$$o=\frac{1}{4}\left(|R|+|{\bf I}(R)|+\frac{|R|}{5}+\frac{3|R|}{4}\right)=\frac{1}{4}\left(\frac{39}{20}|R|+|{\bf I}(R)|\right)\le \frac{|R|+|{\bf I}(R)|}{2}-\frac{|R|}{80}=\mathbf{c}(R)-\frac{|R|}{80}.$$
For the rest of the argument we may suppose that $|R:\cent R\varphi|\le 4$ and hence $|R:\cent R\varphi|=4$. Therefore
\begin{equation}\label{eq:lead}
o=\frac{1}{4}\left(\frac{5|R|}{4}+|{\bf I}(R)|+|\cent R\varphi^{-1}|\right).
\end{equation}

Assume $|\cent R\varphi^{-1}|\le 2|R|/3$. Then, from~\eqref{eq:lead}, we get 
$$o=\frac{1}{4}\left(\frac{5|R|}{4}+|{\bf I}(R)|+\frac{2|R|}{3}\right)=\frac{1}{4}\left(\frac{23}{12}|R|+|{\bf I}(R)|\right)\le \frac{|R|+|{\bf I}(R)|}{2}-\frac{|R|}{48}=\mathbf{c}(R)-\frac{|R|}{48}.$$
Therefore, we may assume that $|\cent R\varphi^{-1}|>2|R|/3$.

As $2|R|/3<|\cent R\varphi^{-1}|\le 3|R|/4$, from~\cite[Structure Theorem]{LMac}, we deduce that $$|\cent R\varphi^{-1}|=\frac{3|R|}{4}$$ and that $R$ contains an abelian subgroup $A$  with $|R:A|=|A:\cent Ax|=2$, for every $x\in R\setminus A$.

Suppose that $A$ is not $\varphi$-invariant. Since $\varphi$ has order $p=2$, $A\cap A^\varphi$ has index $4$ in $R$ and is $\varphi$-invariant. Observe that $R/(A\cap A^\varphi)$ is an elementary abelian $2$-group of order $4$. Let $T$ be the index $2$ subgroup of $R$ containing $A\cap A^\varphi$ and with $A\ne T\ne A^\varphi$. We have $$\cent R\varphi^{-1}=(\cent R\varphi^{-1}\cap A)\cup (\cent R\varphi^{-1}\cap A^\varphi)\cup (\cent R\varphi^{-1}\cap T).$$
Let $a\in \cent R\varphi^{-1}\cap A$. Then $a^{-1}=a^{\varphi}\in A^\varphi\cap A$ and hence $\cent R\varphi^{-1}\cap A=\cent {A\cap A^\varphi}\varphi^{-1}$ and (similarly) $\cent R\varphi^{-1}\cap A^\varphi=\cent {A\cap A^\varphi}\varphi^{-1}$. Therefore
$$\cent R\varphi^{-1}=\cent {A\cap A^\varphi}\varphi^{-1}\cup (\cent R\varphi^{-1}\cap T).$$
We deduce
$$|\cent R\varphi^{-1}|=|\cent {A\cap A^\varphi}\varphi^{-1}|+|\cent R\varphi^{-1}\cap (T\setminus(A\cap A^\varphi))|\le |A\cap A^\varphi|+(|T|-|A\cap A^\varphi|)=|T|=\frac{|R|}{2};$$
however this contradicts $|\cent R\varphi^{-1}|=3|R|/4$. Thus $A$ is $\varphi$-invariant.

\smallskip

\noindent\textsc{Case: }$\varphi$ 
inverts each element in $A$, that is, $a^\varphi=a^{-1}$, for every $a\in A$.

\smallskip

\noindent As $|\cent R\varphi^{-1}|=3|R|/4>|R|/2=|A|$, there exists $x\in R\setminus A$ with $x^\varphi=x^{-1}$. It follows that $\cent R\varphi^{-1}=A\cup \cent A xx$ and hence 
\begin{equation}\label{eq:art5}
|\cent R\varphi^{-1}|=|A|+\frac{|A|}{2}.
\end{equation} A computation gives $\cent R \varphi={\bf I}(A)\cup \{ax\mid a\in A, a^2=x^{-2}\}$. Let $a,b\in A$ with the property that $a^2=x^{-2}=b^{2}$. Then $(ab^{-1})^2=a^2b^{-2}=x^{-2}x^2=1$. This shows that either  $\{ax\mid a\in A, a^2=x^{-2}\}$ is the empty set or $\{ax\mid a\in A, a^2=x^{-2}\}=\{b\bar{a}x\mid b\in {\bf I}(A)\}$, where $\bar{a}\in A$ is a fixed element with $\bar{a}^2=x^{-2}$. In particular, $|\cent R\varphi|\in\{|{\bf I}(A)|,2|{\bf I}(A)|\}$. As $|R:\cent R\varphi|=4$, we deduce that either $|A:{\bf I}(A)|=2$ and  $\{ax\mid a\in A, a^2=x^{-2}\}=\emptyset$, or $|A:{\bf I}(A)|=4$ and  $\{ax\mid a\in A, a^2=x^{-2}\}\ne\emptyset$. In the first case, from~\eqref{eq:art5}, we have $$|\cent R\varphi^{-1}|=|A|+|A|/2=|A|+|{\bf I}(A)|\le \frac{|R|}{2}+|{\bf I}(R)|.$$
Thus
\begin{eqnarray*}
o&\le& \frac{1}{4}\left(\frac{5}{4}|R|+|{\bf I}(R)|+\frac{|R|}{2}+|{\bf I}(R)|\right)\le 
\frac{1}{4}\left(\frac{7}{4}|R|+2|{\bf I}(R)|\right)\\ 
&=&\frac{|R|+|{\bf I}(R)|}{2}-\frac{|R|}{16}=\mathbf{c}(R)-\frac{|R|}{16}.
\end{eqnarray*}
In the second case, from~\eqref{eq:art5}, we have $$|\cent R\varphi^{-1}|=|A|+|A|/2=|A|+2|{\bf I}(A)|=\frac{|R|}{2}+|{\bf I}(A)|+|{\bf I}(A)|=\frac{|R|}{2}+\frac{|R|}{8}+|{\bf I}(A)|\le \frac{5|R|}{8}+|{\bf I}(R)|.$$
Thus
\begin{eqnarray*}
o&\le& \frac{1}{4}\left(\frac{5}{4}|R|+|{\bf I}(R)|+\frac{5|R|}{8}+|{\bf I}(R)|\right)\le 
\frac{1}{4}\left(\frac{15}{8}|R|+2|{\bf I}(R)|\right)\\ 
&=&\frac{|R|+|{\bf I}(R)|}{2}-\frac{|R|}{32}=\mathbf{c}(R)-\frac{|R|}{32}.
\end{eqnarray*}

\smallskip

It remains to deal with the case that $\varphi$ does not invert each element of $A$. Observe that $\cent A\varphi^{-1}$ is a subgroup of $A$ because $A$ is abelian.
 In particular, $|\cent R\varphi^{-1}\cap A|\le |A|/2=|R|/4$. As $|\cent R\varphi^{-1}|=3|R|/4$, we deduce that  $\varphi$ inverts each element in $R\setminus A$ and $|\cent R\varphi^{-1}\cap A|=|R|/4$.

\smallskip

\noindent\textsc{Case: }$\varphi$ 
inverts each element in $R\setminus A$.

\smallskip

\noindent Fix $x\in R\setminus A$. In particular, for every $a\in A$, we have $x^{-1}a^{-1}=(ax)^{-1}=(ax)^\varphi=a^\varphi x^\varphi=a^\varphi x^{-1}$ and hence $a^\varphi=x^{-1}a^{-1}x$. From this it follows $$\cent R\varphi^{-1}=\cent Ax\cup Ax \textrm{ and }\cent R\varphi=\cent A{\iota_x}^{-1}\cup {\bf I}(R\setminus A),$$ where ${\bf I}(R\setminus A):=\{m\in R\setminus A\mid m^2=1\}$. 

Suppose that ${\bf I}(R\setminus A)=\emptyset$. Then $\cent R\varphi=\cent A{\iota_x}^{-1}$ and hence $|A:\cent A{\iota_x}^{-1}|=2$ because $|R:\cent R\varphi|=4$. As $|A:\cent A x|=2$, we deduce that $|A:\cent A{x}\cap \cent {A}{\iota_x}^{-1}|\le 4$. Clearly, $\cent A x\cap \cent A{\iota_x}^{-1}\subseteq {\bf I}(A)$ and hence $|A:{\bf I}(A)|\le 4$. We deduce
$$|\cent R\varphi|=|\cent A{\iota_x}^{-1}|=|\cent A{\iota_x}^{-1}\cap (A\setminus \cent A x)|+|\cent A{\iota_x}^{-1}\cap \cent Ax|\le \frac{|A|}{4}+|{\bf I}(A)|\le \frac{|R|}{8}+|{\bf I}(R)|.$$
Thus
\begin{eqnarray*}
o&=& \frac{1}{4}\left(|R|+|{\bf I}(R)|+|\cent R\varphi|+|\cent R\varphi^{-1}|\right)=
\frac{1}{4}\left(\frac{7|R|}{4}+|{\bf I}(R)|+|\cent R\varphi|\right)\le
\frac{1}{4}\left(\frac{7|R|}{4}+|{\bf I}(R)|+\frac{|R|}{8}+|{\bf I}(R)|\right)\\
&=&
\frac{1}{4}\left(\frac{15|R|}{8}+2|{\bf I}(R)|\right)\le
\frac{|R|+|{\bf I}(R)|}{2}-\frac{|R|}{32}=\mathbf{c}(R)-\frac{|R|}{32}.
\end{eqnarray*}

Suppose that ${\bf I}(R\setminus A)\ne \emptyset$. In particular, we may suppose that $x\in {\bf I}(R\setminus A)$, that is, $x^2=1$. From this it follows that 
\begin{align*}
\cent R\varphi&=\cent A{\iota_x}^{-1}\cup \cent A{\iota_x}^{-1}x,\\
\cent R\varphi^{-1}&=\cent Ax\cup Ax,\\
{\bf I}(R)&={\bf I}(A)\cup \cent A{\iota_x}^{-1}x.
\end{align*}
As $|\cent R\varphi|=|R|/4$, we deduce $|\cent A{\iota_x}^{-1}|=|A|/4$.
Assume  that $|{\bf I}(R)|\ge |\cent R \varphi|$, that is, $|{\bf I}(A)|\ge |\cent A{\iota_x}^{-1}|$. Thus
\begin{eqnarray*}
o&=& \frac{1}{4}\left(|R|+|{\bf I}(R)|+|\cent R\varphi|+|\cent R\varphi^{-1}|\right)=
\frac{1}{4}\left(\frac{7|R|}{4}+2|{\bf I}(R)|\right)\le
\frac{|R|+|{\bf I}(R)|}{2}-\frac{|R|}{16}=\mathbf{c}(R)-\frac{|R|}{16}.
\end{eqnarray*}
Assume that $|{\bf I}(R)|<|\cent R\varphi|$, that is, $|{\bf I}(A)|<|\cent A{\iota_x}^{-1}|$. Observe now
$$\cent Ax\cap {\bf I}(A)=\cent A{\iota_x}^{-1}\cap {\bf I}(A)=\cent Ax\cap \cent A{\iota_x}^{-1}.$$
As $|{\bf I}(R)|<|\cent R\varphi|$, from these equalities we deduce ${\bf I}(A)=\cent Ax\cap \cent A{\iota_x}^{-1}$ and that $\cent Ax\ne \cent A{\iota_x}^{-1}$. Moreover,
$$|{\bf I}(A)|=\frac{|A|}{8},\,|\cent Ax|=\frac{|A|}{2},\,|\cent A{\iota_x}^{-1}|=\frac{|A|}{4}.$$
In particular, $\mathbf{c}(R)=(|R|+|{\bf I}(R)|)/2=19|R|/32$. Thus
\begin{eqnarray*}
o&=& \frac{1}{4}|R|\left(1+\frac{3}{16}+\frac{1}{4}+\frac{3}{4}\right)=\frac{35|R|}{64}=\frac{19|R|}{32}-\frac{3|R|}{64}=\mathbf{c}(R)-\frac{3|R|}{64}.
\end{eqnarray*}
\end{proof}

\begin{proposition}\label{propo:aut}Let $R$ be a finite group and suppose that $R$ is not an abelian group of exponent greater than $2$ and that $R$ is not a generalized dicyclic group. Then the set
$$\{S\subseteq R\mid S=S^{-1}, R<\norm{\Aut(\Cay(R,S))}{R}\}$$
has cardinality at most $2^{\mathbf{c}(R)-|R|/96+(\log_2|R|)^2}$.
\end{proposition}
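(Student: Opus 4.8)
The plan is to reduce the statement to a union bound over the group automorphisms of $R$, with Lemma~\ref{l:aut} supplying all the real content. First I would translate the normalizer condition into a statement purely about $\Aut(R)$. Since $R$ acts regularly on itself by right multiplication, the normalizer of $R$ in the full symmetric group $\Sym(R)$ is the holomorph $R\rtimes\Aut(R)$, and the stabilizer of the identity vertex in this normalizer is exactly $\Aut(R)$. Consequently $\norm{\Aut(\Cay(R,S))}{R}=\Aut(\Cay(R,S))\cap(R\rtimes\Aut(R))$, and this group strictly contains $R$ if and only if some non-identity $\varphi\in\Aut(R)$ lies in $\Aut(\Cay(R,S))$. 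A direct check shows that $\varphi\in\Aut(R)$ preserves the arc-set of $\Cay(R,S)$ precisely when $S^\varphi=S$, since $h^\varphi(g^\varphi)^{-1}=(hg^{-1})^\varphi$. Therefore
$$\{S\subseteq R\mid S=S^{-1},\ R<\norm{\Aut(\Cay(R,S))}{R}\}=\bigcup_{\varphi\in\Aut(R)\setminus\{id_R\}}\{S\subseteq R\mid S=S^{-1},\ S^\varphi=S\}.$$

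Next I would apply Lemma~\ref{l:aut} to each non-identity $\varphi$. The key observation is that the hypotheses of the proposition rule out the two exceptional conclusions of that lemma: since $R$ is neither abelian of exponent greater than $2$ nor generalized dicyclic, conclusions~\eqref{eq:aut01} and~\eqref{eq:aut02} cannot hold for any $\varphi$ (each of them would force $R$ itself into one of the two forbidden families). Hence every non-identity $\varphi\in\Aut(R)$ falls into case~\eqref{eq:aut00}, so the number of $\varphi$-invariant inverse-closed subsets of $R$ is at most $2^{\mathbf{c}(R)-|R|/96}$.

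It then remains to bound the number of summands. A standard argument gives $|\Aut(R)|\le 2^{(\log_2|R|)^2}$: the group $R$ can be generated by at most $\log_2|R|$ elements, and an automorphism is determined by the images of a fixed generating set, so $|\Aut(R)|\le|R|^{\log_2|R|}=2^{(\log_2|R|)^2}$. Combining this with the union bound from the previous step yields
$$\left|\bigcup_{\varphi\ne id_R}\{S\subseteq R\mid S=S^{-1},\ S^\varphi=S\}\right|\le|\Aut(R)|\cdot 2^{\mathbf{c}(R)-|R|/96}\le 2^{(\log_2|R|)^2}\cdot 2^{\mathbf{c}(R)-|R|/96},$$
which is exactly the claimed bound.

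All the genuine difficulty has already been absorbed into Lemma~\ref{l:aut}, so the proposition itself is a clean counting argument. The only points requiring care are the identification of the point-stabilizer of $\norm{\Sym(R)}{R}$ with $\Aut(R)$ and the elementary estimate for $|\Aut(R)|$; the most subtle (though still routine) step is verifying that the two exceptional families are genuinely excluded, which is immediate from the hypotheses on $R$.
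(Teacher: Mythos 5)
Your proof is correct and follows essentially the same route as the paper's: a union bound over the non-identity automorphisms of $R$, with Lemma~\ref{l:aut} (whose exceptional conclusions are excluded by the hypotheses on $R$) bounding each term by $2^{\mathbf{c}(R)-|R|/96}$ and the generating-set argument giving $|\Aut(R)|\le 2^{(\log_2|R|)^2}$. You merely spell out two steps the paper leaves implicit, namely the identification of $\norm{\Aut(\Cay(R,S))}{R}>R$ with the existence of a non-identity $\varphi\in\Aut(R)$ fixing $S$ setwise, and the verification that such $\varphi$ preserves arcs exactly when $S^\varphi=S$.
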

\begin{proof}
Since a chain of subgroups of $R$ has length at most $\log_2 (|R|)$, $R$ has a
generating set of cardinality at most $\lfloor \log_2 (|R|)\rfloor \le \log_2 (|R|)$. Any automorphism of $R$ is uniquely determined by its action on the elements of a generating set for $R$. Therefore $| \Aut(R)| \le |R|^{\lfloor \log_2 (|R|)\rfloor} \le 2^{(\log_2 (|R|))^2}$.
Now the proof follows from Lemma~\ref{l:aut} and from the fact that we have at most $|\Aut(R)|\le 2^{(\log |R|)^2}$ choices for the non-identity automorphism $\varphi$ of $R$.
\end{proof}
\section{Proofs of Theorems~$\ref{thrm:1}$ and~$\ref{equivalence}$}

\begin{proof}[Proof of Theorem~$\ref{thrm:1}$]
Let $R$ be a finite group of order $r$ which is neither generalized dicyclic nor abelian of exponent greater than $2$. 
By Lemma~\ref{lemma111} and Proposition~\ref{propo:aut}, we have
$$\lim_{r\to\infty}\frac{|\{S\subseteq R\mid S=S^{-1}, R<\norm{\Aut(\Cay(R,S))}{R}\}|}{|\{S\subseteq R\mid S=S^{-1}\}|}\le\lim_{r\to\infty}2^{-\frac{r}{96}+(\log_2(r))^2}=0.\qedhere$$
\end{proof}

\begin{proof}[Proof of Theorem~$\ref{equivalence}$]
Let $R$ be a finite group. It was shown in~\cite{DSV,MSV} that Xu conjecture holds true when $R$ is a generalized dicyclic group or when $R$ is an abelian group of exponent greater than $2$. In particular, for the rest of the proof we may assume that $R$ is neither a generalized dicyclic group nor an abelian group of exponent greater than $2$.

 Let us denote by $\mathcal{N}(R):=\{S\subseteq R\mid S=S^{-1}, R\unlhd \Aut(\Cay(R,S))\}$, $\mathcal{C}(R):=\{S\subseteq R\mid S=S^{-1}, R=\Aut(\Cay(R,S))\}$ and $\mathcal{T}(R):=\{S\subseteq R\mid S=S^{-1}\}$.  If Conjecture~\ref{conjecture....} holds true, then 

$$\lim_{|R|\to\infty}\frac{|\mathcal{C}(R)|}{|\mathcal{T}(R)|}=1$$
and hence 
$$\lim_{|R|\to\infty}\frac{|\mathcal{N}(R)|}{|\mathcal{T}(R)|}=1,$$
because $\mathcal{C}(R)\subseteq \mathcal{N}(R)$, that is, Conjecture~\ref{conjectureXu} holds true. Conversely, suppose that Conjecture~\ref{conjectureXu} holds true, that is, $\lim_{|R|\to\infty}|\mathcal{N}(R)|/|\mathcal{T}(R)|=1$. Now,
\begin{eqnarray*}
\mathcal{N}(S)&=&\mathcal{C}(S)\cup\{S\subseteq R\mid S=S^{-1}, R\unlhd \Aut(\Cay(R,S)),R<\Aut(\Cay(R,S))\}\\
&\subseteq &\mathcal{C}(S)\cup\{S\subseteq R\mid S=S^{-1}, R<\norm{\Aut(\Cay(R,S))}R\}
\end{eqnarray*}
and hence, by Theorem~\ref{thrm:1}, we have
$$1=\lim_{|R|\to\infty}\frac{|\mathcal{N}(R)|}{|\mathcal{T}(R)|}\le
\lim_{|R|\to\infty}\frac{|\mathcal{C}(R)|}{|\mathcal{T}(R)|}+\lim_{|R|\to\infty}\frac{|\{S\subseteq R\mid S=S^{-1}, R<\norm{\Aut(\Cay(R,S))}R\}|}{|\mathcal{T}(R)|}=
\lim_{|R|\to\infty}\frac{|\mathcal{C}(R)|}{|\mathcal{T}(R)|},
$$
that is, Theorem~\ref{conjecture....} holds true. 

\end{proof}

\thebibliography{10}

\bibitem{babai11}L.~Babai, Finite digraphs with given regular automorphism groups, \textit{Periodica Mathematica
Hungarica} \textbf{11} (1980), 257--270.

\bibitem{BaGo}L.~Babai, C.~D.~Godsil, On the automorphism groups of almost all Cayley graphs, \textit{European J. Combin.} \textbf{3} (1982), 9--15.

\bibitem{DSV}E. Dobson, P. Spiga, G. Verret, Cayley graphs on abelian groups, \textit{Combinatorica }\textbf{36} (2016), 371--393.

\bibitem{DTW}J.~K.~Doyle, T.~W.~Tucker, and M.~E.~Watkins, Graphical Frobenius Representations, \textit{J. Algebraic Combin.} \textbf{48} (2018), 405--428.

\bibitem{Go2}C.~D.~Godsil, On the full automorphism group of a graph, \textit{Combinatorica} \textbf{1} (1981), 243--256.

\bibitem{Godsilnon}C.~D.~Godsil, GRR’s for non-solvable groups, in Algebraic Methods in Graph theory (Proc. Conf. Szeged 1978 L.~Lov\v{a}sz and V. T. S\v{o}s, eds), Coll. Math. Soc. J. Bolyai 25, North-Holland, Amsterdam, 1981, 221--239.

\bibitem{1010}D.~Hetzel, \"{U}ber regul\"{a}re graphische Darstellung von aufl\"{o}sbaren Gruppen, Technische Universit\"{a}t, Berlin, 1976.
\bibitem{1313}W.~Imrich, Graphical regular representations of groups odd order, in: Combinatorics, Coll. Math. Soc. J\v{a}nos. Bolayi 18 (1976), 611--621.

\bibitem{1414}W. Imrich, M. Watkins, On graphical regular representations of cyclic extensions of groups, \textit{Pacific J. Math. }\textbf{54} (1974), 1--17.

\bibitem{Miller}G.~A.~Miller, Groups containing the largest possible number of operators of order two, \textit{Amer.
Math. Monthly }\textbf{12} (1905), 149--151.

\bibitem{MSMS}J.~Morris, P.~Spiga, Asymptotic enumeration of Cayley digraphs,  	arXiv:1811.07709 [math.CO].

\bibitem{MSV}J. Morris, P. Spiga, G. Verret, Automorphisms of Cayley graphs on generalised dicyclic groups, \textit{European J. Combin. }\textbf{43} (2015), 68--81.

\bibitem{1717}L. A. Nowitz, M. Watkins, Graphical regular representations of direct product of groups, \textit{Monatsh. Math. }\textbf{76} (1972), 168--171.

\bibitem{1818}L. A. Nowitz, M. Watkins, Graphical regular representations of non-abelian groups, II, \textit{Canad. J. Math. }\textbf{24} (1972), 1009--1018.

\bibitem{1919}L. A. Notwitz, M. Watkins, Graphical regular representations of non-abelian groups, I, \textit{Canad. J. Math. }\textbf{24} (1972), 993--1008.

\bibitem{DFR}P. Spiga, On the existence of Frobenius digraphical representations, \textit{The Electronic Journal of Combinatorics} \textbf{25} (2018), paper \#P2.6.
\bibitem{GFR}P. Spiga, On the existence of graphical Frobenius representations and their asymptotic enumeration: an answer to the GFR conjecture, \textit{Journal Combin. Theory Series B}, doi.org/10.1016/j.jctb.2019.10.003.

\bibitem{Watkins22}M.~E.~Watkins, On the action of non-abelian groups on graphs, \textit{J. Combin. Theory} \textbf{11} (1971), 95--104.

\bibitem{Xu1998} M.Y.~Xu, Automorphism groups and isomorphisms of Cayley digraphs, \textit{Discrete Math.} \textbf{182} (1998), 309--319.

\end{document}